\documentclass[12pt]{article}
\usepackage[english]{babel}
\usepackage{graphicx} 
\usepackage{latexsym,color,amsmath,amsthm,amssymb,amscd,amsfonts}
\usepackage{scalefnt}
\usepackage[font=small,labelfont=bf]{caption}
\usepackage{float}
\usepackage{graphicx}
\usepackage{amsmath}
\usepackage{relsize}
\usepackage{mathrsfs}
\usepackage{upgreek}
\usepackage{sectsty}
\usepackage{bbm}

\newtheoremstyle{nonum}{}{}{\itshape}{}{\bfseries}{.}{ }{\thmnote{#3}}

\makeatletter
\newcommand{\cbigoplus}{\DOTSB\cbigoplus@\slimits@}
\newcommand{\cbigoplus@}{\mathop{\widehat{\bigoplus}}}
\makeatother

\newtheorem{thm}{Theorem}[section]
\newtheorem*{thm*}{Theorem}

\newtheorem{cor}[thm]{Corollary}
\newtheorem{lem}[thm]{Lemma}
\newtheorem*{lem*}{Lemma}
\newtheorem{rem}[thm]{Remark}
\newtheorem*{rem*}{Remark}
\newtheorem{prop}[thm]{Proposition}
\newtheorem*{prop*}{Proposition}

\newtheorem*{definition*}{Definition}

\newtheorem*{fact*}{Fact}

\newtheorem*{rems*}{Remarks}

\theoremstyle{nonum}

\newcommand{\R}{\mathbb R}

\newcommand{\Z}{\mathbb Z}
\newcommand{\N}{\mathbb N}
\newcommand{\Q}{\mathbb Q}

\usepackage{bbm}

\def\conv{{\rm conv}}

\DeclareMathOperator{\Int}{Int}

\usepackage{xcolor}
\usepackage{hyperref}

\hypersetup{
  colorlinks=true,
  linkcolor=blue,
  citecolor=blue,
  urlcolor=blue
}

\title{\normalsize\textbf{\MakeUppercase{Asymptotics For Face Numbers of Certain Hanner Polytopes, With Applications}}}
\author{Tomer Milo}
\date{}

\begin{document}

\maketitle

\begin{abstract}
   \noindent We provide asymptotics for the number of faces of a certain family of Hanner polytopes. As a corollary, we come close to saturating the FLM inequality for a certain family of parameters.
\end{abstract}

\section{Introduction}
This note provides asymptotics for the face numbers of a certain family of Hanner polytopes which naturally appears in the study of the Figiel-Lindenstrauss-Milman (FLM) inequality, originally proved in \cite{FLM}. It states the following.
\begin{thm}\label{thm: FLM}
    Let $P \subset \R^n$ be a polytope. Let $V$, $\mathcal{F}$ denote the set vertices $(0-\text{dimensional faces})$ and facets $((n-1)-\text{dimensional faces})$ respectively. Let $B_2^n$ denote the Euclidean unit ball, and assume $rB_2^n \subset P \subset RB_2^n$. Then
    \[ \log|V| \cdot \log|\mathcal{F}| \cdot \left(\frac{R}{r}\right)^2 \geq c n^2 \]
    where $c>0$ is some universal constant.
\end{thm}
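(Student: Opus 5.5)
Our plan follows the original FLM argument: use Dvoretzky's theorem in its Milman form to find a common almost-Euclidean section, then count vertices and facets through a one-dimensional concentration argument. Let $K=P$ and put $\|\cdot\|=\|\cdot\|_P$. Then $P^\circ$ is a polytope whose vertices are the normals of the facets of $P$, rescaled, and whose facets correspond to the vertices of $P$. We also have $R^{-1}B_2^n\subset P^\circ\subset r^{-1}B_2^n$.

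First, recall the Dvoretzky–Milman dimension. Set $M(K)=\int_{S^{n-1}}\|x\|\,d\sigma$ and $b(K)=\max_{S^{n-1}}\|x\|$. Then $K$ has $(1+\varepsilon)$-Euclidean sections of dimension $k(K)\ge c\,n\,(M(K)/b(K))^2$, and these sections are in fact found on a random subspace with high probability. Applying this to $P$ and to $P^\circ$ with a union bound, we obtain a common subspace $E$. Its dimension is at least a constant multiple of $\min\{k(P),k(P^\circ)\}$. On $E$, both $P\cap E$ and $P^\circ\cap E$ are $2$-Euclidean. Moreover $M(P)M(P^\circ)\ge 1$ by Cauchy–Schwarz, and $b(P)b(P^\circ)\le R/r$. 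Hence
\[k(P)\,k(P^\circ)\ \ge\ c\,n^2\,\frac{(M(P)M(P^\circ))^2}{(b(P)b(P^\circ))^2}\ \ge\ c\,n^2\Big(\frac rR\Big)^2 .\]

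Second, we bound the number of vertices. Set $k=\dim E$. Since $P\cap E$ is $2$-Euclidean, the projection of $P^\circ$ onto $E$, which is $(P\cap E)^\circ$, is also $2$-Euclidean. That projection is the convex hull of the projections of the vertices of $P^\circ$, so it has at most $|\mathcal F|$ vertices. A polytope with $N$ vertices that contains $\rho B_2^k$ and is contained in $2\rho B_2^k$ must satisfy $\log N\ge ck$. This is the standard volume or cap-covering bound: every point of the sphere of radius $\rho$ must lie in one of the caps cut off by the vertices, and each cap has measure at most $e^{-ck}$. Therefore $\log|\mathcal F|\ge c\,k(P)$. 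The same argument applied to $P$ itself, projected onto a subspace where $P^\circ$ has Euclidean sections, gives $\log|V|\ge c\,k(P^\circ)$.

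Finally, we combine the two estimates. We do not need a common subspace, because the vertex and facet bounds can be proved separately for $k(P)$ and $k(P^\circ)$. Multiplying them gives $\log|V|\log|\mathcal F|\ge c\,k(P)k(P^\circ)\ge c n^2 (r/R)^2$, which is the claim.

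We expect the main obstacle to be the first step, namely establishing Milman's lower bound $k\ge c n (M/b)^2$ via concentration of measure on the sphere. We would assume this as the known Dvoretzky–Milman theorem. The remaining steps are the duality identities and an elementary cap-counting estimate.
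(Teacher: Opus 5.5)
Your argument is correct, and it is essentially the original Figiel--Lindenstrauss--Milman proof. You apply Milman's bound $k(K)\ge cn\,(M(K)/b(K))^2$ to both $P$ and $P^\circ$ and use $M(P)M(P^\circ)\ge 1$ and $b(P)b(P^\circ)\le R/r$. The duality $(P\cap E)^\circ=\pi_E(P^\circ)$ then turns a $2$-Euclidean section of $P$ into a $2$-Euclidean polytope with at most $|\mathcal F|$ vertices, and symmetrically for $|V|$.

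The paper does not prove this inequality at all; it quotes it from the literature. The block at the end headed ``Proof of Theorem~\ref{thm: FLM}'' is really the proof of Theorem~\ref{thm: main FLM improvment}. That block is a construction, using random sections $E\cap P_n^a$, showing the inequality is nearly tight for certain parameters. So the two arguments go in opposite directions: yours gives the lower bound for every polytope, while the paper's only exhibits specific polytopes with a small product and could not yield this statement.

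A few points to tighten:
\begin{itemize}
\item[(i)] The statement does not assume central symmetry. You should say explicitly that Milman's theorem, the bound $\|x\|_P\,h_P(x)\ge |x|^2$, and the section/projection duality all hold for any polytope with $0$ in its interior, which $rB_2^n\subset P$ guarantees. The gauge is sublinear and $b$-Lipschitz, and that is all the concentration-plus-net argument uses.
\item[(ii)] The common-subspace step in your first paragraph is never used and can be dropped, as you yourself note.
\item[(iii)] Your cap-covering sentence is garbled. The clean version: if $\rho B_2^k\subset\conv\{v_1,\dots,v_N\}\subset 2\rho B_2^k$, then for every $\theta\in S^{k-1}$ some $v_i$ satisfies $\langle v_i/|v_i|,\theta\rangle\ge\rho/|v_i|\ge 1/2$. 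So $N$ caps, each of normalized measure at most $e^{-k/8}$, cover the sphere, and $\log N\ge k/8$.
\item[(iv)] When $cn(M/b)^2<1$, use a $1$-dimensional section. Then $k(K)\ge\max\{1,\lfloor cn(M/b)^2\rfloor\}\ge\frac{c}{2}n(M/b)^2$ still holds, so the final multiplication goes through unchanged.
\end{itemize}
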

\noindent In \cite{Mil} the question of tightness of this inequality has been explored. The following was proven (here and throughout, $A \sim B$ means that $A = \Theta(B)$, that is, the two quantities are equivalent up to a universal constant).
\begin{thm}\label{thm: my FLM stuff}
\text{1.} For any $a \in (0,1)$, There exists a sequence of polytopes $P_n \subset \R^n$ such that 
\[ \log|V_n| \sim n^a, \: \log|\mathcal{F}_n| \sim n^{1-a}, \quad \text{and} \quad \left(\frac{R_n}{r_n}\right)^2 = n \]

\noindent \text{2.} For any $a \in (0,1)$ There exists a sequence of polytopes $P_n \subset \R^n$ such that
        \[ \log|\mathcal{F}_n| \sim n^{1-a}, \quad \log|V_n| \sim n \:, \quad \text{and} \quad \left( \frac{R_n}{r_n} \right)^2 \sim n^a \]

\noindent \text{3.}  For any $a, \delta \in (0,1)$ There exists a sequence of polytopes $P_n \subset \R^n$ such that
\[ \log|\mathcal{F}_n| = O(n^{1-a}), \quad \log|V_n| = O(n^{a+\delta}) \:, \quad \text{and} \quad \left( \frac{R_n}{r_n} \right)^2 =O( n^{1-\delta + a}), \]
so that 
\[ \log|V_n| \cdot \log|\mathcal{F}_n| \cdot \left( \frac{R_n}{r_n} \right)^2 = O( n^{2+a}). \]
\end{thm}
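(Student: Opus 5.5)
We construct all three families from products and free sums of simplices, cubes and cross-polytopes, and count vertices and facets through the standard rules. If $P_1,\dots,P_k$ are polytopes each containing the origin in its interior, then the product $P_1\times\cdots\times P_k$ has $\prod|V_i|$ vertices and $\sum|\mathcal F_i|$ facets. The free sum $\conv(P_1\cup\cdots\cup P_k)$, with the $P_i$ placed in orthogonal subspaces, has $\sum|V_i|$ vertices and $\prod|\mathcal F_i|$ facets.

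For the ratio $R/r$ we use the Euclidean inradius and circumradius. The product of orthogonal pieces has inradius $\min r_i$ and circumradius $(\sum R_i^2)^{1/2}$. The free sum has circumradius $\max R_i$ and inradius $(\sum r_i^{-2})^{-1/2}$, by duality.

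\textbf{Part 1.} Let $k=n^{a}$ and $m=n^{1-a}$, rounding to integers and padding dimensions, which only affects constants. Take $P_n$ to be the free sum of $k$ copies of the cube $[-1,1]^{m}$.
\begin{itemize}
\item Vertices: $k\,2^{m}$.
\item Facets: $(2m)^{k}$, so $\log|\mathcal F_n|\sim n^{a}\log n$.
\end{itemize}
The logarithm is unwanted, so we swap the roles: take the product of $m$ copies of cross-polytopes $B_1^{k}$. Then
\[ |V_n|=(2k)^m, \qquad |\mathcal F_n|=m\,2^k, \]
which again carries a log factor. To remove it, we use blocks of the form $B_\infty^{j}$ inside $B_1$-type structures, that is, Hanner polytopes $B_1^{k}(B_\infty^{m})$. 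This appears to be the paper's point: face counts of such Hanner polytopes have sharp asymptotics.

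The exact $R/r=\sqrt n$ comes from a direct computation. For the free sum of cubes, $R=\sqrt m$ and $r=(k\cdot 1)^{-1/2}$, so $(R/r)^2=mk=n$.

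For the counts, the free sum of $k$ copies of $B_\infty^m$ has:
\begin{itemize}
\item $k2^m$ vertices, so $\log|V_n|\sim m$;
\item $(2m)^k$ facets.
\end{itemize}
To obtain $\log|\mathcal F_n|\sim k$ with no $\log m$ factor, we instead nest: the product of $k$ copies of $B_1^m$ has $(2m)^k$ vertices and $k2^m$ facets. So whichever parameter gets the log, we choose exponents to absorb it. We set $m=n^{a}$, which gives $\log|\mathcal F_n|\sim m$ exactly. We pick the other factor so that $\log|V|\sim k\log m$ equals $n^{1-a}$ up to constants, by taking the vertex-producing building blocks to be cubes of dimension $\log m$. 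The key step is verifying that $R/r$ stays $\sqrt n$ after this modification.

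\textbf{Part 2.} Take the product of $n^{1-a}$ copies of $B_1^{n^{a}}$, which are cross-polytopes. The facet count is $n^{1-a}2^{n^a}$. We adjust by using cubes in the outer level, so that the count becomes $2^{\Theta(n^{1-a})}$. The vertex count is exponential in $n$, and the ratio is $(R/r)^2\sim n^{a}$.

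\textbf{Part 3.} This part interpolates between Parts 1 and 2 by a three-level Hanner construction with block sizes $n^{a}$, $n^{\delta}$ and the remainder.

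\textbf{Main obstacle.} The main difficulty is getting exact asymptotics rather than bounds off by logarithmic factors. This requires counting faces of nested Hanner polytopes, where products and free sums alternate, precisely enough.
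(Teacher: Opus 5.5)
Your counting rules and radius formulas are correct, but each of the three parts has a genuine gap. Worse, the class of bodies you use (products and free sums of cubes and cross-polytopes) cannot deliver any of them.

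\textbf{Part 1.} The two families you actually compute, with $km=n$, are $k$ cubes $[-1,1]^m$ in free sum and $k$ cross-polytopes $B_1^m$ in product. Both satisfy $\log|V|\cdot\log|\mathcal F|\ge(\log 2)\,n\log(2m)$. The requirement on whichever count is $2^{\Theta(m)}$ forces $m\asymp n^{a}$ or $m\asymp n^{1-a}$, so this product is $\gtrsim n\log n$, while Part 1 needs it to be $\asymp n$. Your proposed repairs are either the same two-level object ($B_1^k(B_\infty^m)$) or are never defined (``cubes of dimension $\log m$''), and changing $k$ alone changes the dimension.

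The defect is structural. For $k$ copies of an $N$-dimensional block $H$, the quantity $\beta=\log|V|\log|\mathcal F|/\dim$ satisfies $\beta(H^{\times k})=\beta(H)+\log k\cdot\log|V(H)|/N$ and $\beta(\conv(H,\dots,H))=\beta(H)+\log k\cdot\log|\mathcal F(H)|/N$. So $\beta$ never decreases, and it already equals $\log 2\cdot\log(2m)$ for an $m$-dimensional cube or cross-polytope. Iterating this with identical copies at each level shows the nesting depth must tend to infinity.

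The missing idea is the basic polytopes recalled in the paper. They are built from $[-1,1]$ by $\log_2 d$ successive doublings $P\mapsto P\times P$ or $P\mapsto\conv(P,P)$, with product steps of density $a$. A product step squares $|V|$ and doubles $|\mathcal F|$; a hull step does the reverse. Because the two kinds of steps are evenly interleaved, the additive contributions sum as geometric series, giving $\log|V|\sim d^a$ and $\log|\mathcal F|\sim d^{1-a}$ with no logarithmic loss. By your own radius rules, $(R/r)^2$ doubles at every step, so it equals $d$ exactly.

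\textbf{Parts 2 and 3.} These fail for a more basic reason. Your Part 2 family (product of $n^{1-a}$ copies of $B_1^{n^a}$) has, by your formulas, $(R/r)^2=n$ and $\log|V|=n^{1-a}\log(2n^a)=o(n)$. So the ratio and vertex claims are false, and $\log|\mathcal F|\sim n^{a}$ has the wrong exponent unless $a=\tfrac12$.

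More importantly, your radius rules give $(R/r)^2=\bigl(\sum_i R_i^2\bigr)\max_i r_i^{-2}\ge\sum_i R_i^2/r_i^2$ for products and $(R/r)^2=\max_i R_i^2\cdot\sum_i r_i^{-2}\ge\sum_i R_i^2/r_i^2$ for free sums. By induction from the segment, every such nesting therefore has $(R/r)^2\ge n$. Changing the linear position does not help: these bodies are unconditional, so the optimal ellipsoid may be taken axis-parallel, where the same induction applies. Part 2 needs $(R/r)^2\sim n^{a}$, and Part 3 needs $O(n^{1-\delta+a})=o(n)$ when $\delta>a$ (for $\delta\le a$, Part 1 already suffices). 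So no ``three-level Hanner construction'' can work.

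What is missing is the passage to random sections of a basic polytope $P\subset\R^d$, as recalled in the paper. Part 2 uses a random $d/2$-dimensional section, and Part 3 a random section of codimension $k=\lfloor d^\delta\rfloor$.
\begin{itemize}
\item A section has at most as many facets as $P$.
\item A vertex of a generic codimension-$k$ section comes from a $k$-face of $P$. In Part 3 this gives $\log|V|\le\log\binom{f_0(P)}{k+1}=O(d^{a+\delta})$. In Part 2 it gives $\log|V|=O(d)$, since $P$ has at most $3^d$ faces, with the matching lower bound forced by the FLM inequality itself.
\item The reduction of $(R/r)^2$ to $\sim d^{a}$, resp.\ $O(d^{1-\delta+a})$, is the probabilistic estimate for random sections established in the cited work.
\end{itemize}
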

\noindent Parts 1 and 2 provides families of polytopes which completely saturate the FLM inequality for a certain family of parameters; namely, $\{(a,1-a,1): a \in (0,1)\}$ for the first part, and $\{(1-a,1,a): a \in (0,1)\}$ for the second part.  \\
This note slightly improves 3. We prove:
\begin{thm}\label{thm: main FLM improvment}
    Let $a, \delta \in (0,1)$. 
    
\noindent 1. If $ a \in \Q$, there exists a sequence of polytopes $P_{n} \subset \R^{n}$ such that
\[ \log|\mathcal{F}_{n}| = O(n^{1-a}), \quad \log|V_{n}| = O(n^{a+\delta(1-a)})\:, \quad \text{and} \quad \left( \frac{R_{n}}{r_{n}} \right)^2 = O(n^{1-\delta + a} )\]
so that 
\[ \log|V_{n}| \cdot \log|\mathcal{F}_{n}| \cdot \left( \frac{R_{n}}{r_{n}} \right)^2 = O( n^{2+a(1-\delta) }) \]
\smallskip
\noindent 2. If $a \notin \Q$, there exists a sequence of polytopes $P_n \subset \R^n$ such that
\[ \log|\mathcal{F}_{n}| = O(n^{1-a}), \quad \log|V_{n}| = O(n^{a+\delta(1-a) + o_n(1)})\:, \quad \text{and} \quad \left( \frac{R_{n}}{r_{n}} \right)^2 = O(n^{1-\delta + a} ) \]
so that
\[ \log|V_{n}| \cdot \log|\mathcal{F}_{n}| \cdot \left( \frac{R_{n}}{r_{n}} \right)^2 = O( n^{2+a(1-\delta) + o_n(1) })  \]
Concretely, we may take $o_n(1)$ to be $ O(\frac{1}{\sqrt{\log n}})$ in the case $a \notin \Q$. All the implied constants in the big-$O$ notation depend only on $a$ and $\delta$. 
\end{thm}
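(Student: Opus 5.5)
The plan is to build $P_n$ as a free sum ($\ell_1$-sum) of many copies of the polytopes from part 2 of Theorem \ref{thm: my FLM stuff}, rather than to design a new Hanner polytope from scratch. Recall how the relevant quantities behave under a free sum $K_1\oplus_1\cdots\oplus_1 K_m$ of polytopes $K_i\subset\R^{d}$ with $rB_2^{d}\subset K_i\subset RB_2^{d}$:
\begin{itemize}
\item vertices add, so $|V|=\sum|V(K_i)|$;
\item facets multiply, since each facet is a join of one facet from each summand, so $|\mathcal F|=\prod|\mathcal F(K_i)|$;
\item the circumradius stays $R$;
\item the inradius becomes $r/\sqrt m$, because the free sum of $m$ copies of $rB_2^d$ has inradius $r/\sqrt m$ and the free sum is monotone.
\end{itemize}
Hence $\log|V|\le \log m+\max_i\log|V(K_i)|$, $\log|\mathcal F|=\sum_i\log|\mathcal F(K_i)|$, and $(R/r)^2$ gets multiplied by $m$.

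Now fix $a,\delta$ and set
\[
a'=\frac{a}{a+\delta(1-a)}\in(0,1).
\]
Apply part 2 of Theorem \ref{thm: my FLM stuff} with parameter $a'$ in dimension $d$, obtaining $Q_d$ with $\log|\mathcal F(Q_d)|\sim d^{1-a'}$, $\log|V(Q_d)|\sim d$ and $(R/r)^2\sim d^{a'}$. Take $P_n=Q_d\oplus_1\cdots\oplus_1 Q_d$ with $m$ summands, where $n=md$ and
\[
d\approx n^{a+\delta(1-a)},\qquad m\approx n^{(1-a)(1-\delta)}.
\]
The free-sum rules above give the following.
\begin{itemize}
\item $\log|V_n|\lesssim d+\log m\sim n^{a+\delta(1-a)}$.
\item $\log|\mathcal F_n|\sim m\,d^{1-a'}$. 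The choice of $a'$ makes $(1-a')(a+\delta(1-a))=\delta(1-a)$, so this equals $n^{(1-a)(1-\delta)}\,n^{\delta(1-a)}=n^{1-a}$.
\item $(R/r)^2\sim m\,d^{a'}=n^{(1-a)(1-\delta)+a}=n^{1-\delta+a\delta}\le n^{1-\delta+a}$.
\end{itemize}
Multiplying the three bounds gives the product estimate in the statement. All implied constants come from Theorem \ref{thm: my FLM stuff} and depend only on $a'$, that is, on $a$ and $\delta$.

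The main obstacle is arithmetic: we need integers $m,d$ with $md$ equal, or close, to $n$ and with the prescribed exponents. I would handle it as follows.
\begin{itemize}
\item If $a\in\Q$, the exponents $a+\delta(1-a)$ and $(1-a)(1-\delta)$ should be arranged rational (if necessary replacing $\delta$ by a slightly smaller rational, which only helps after adjusting constants). I would first restrict to $n$ of the form $N^{q}$ for a common denominator $q$, where $d$ and $m$ are exact integer powers of $N$. For general $n$, I would take the largest admissible $n'\le n$ and add a free summand of dimension $n-n'$, for instance a cross-polytope or cube of that dimension. This only changes the quantities by constant factors, provided $n'\ge cn$, which can be ensured by choosing the grid of admissible dimensions geometrically dense.
\item If $a\notin\Q$, exact powers are unavailable. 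I would instead choose $d=\lceil n^{a+\delta(1-a)}\rceil$ and $m=\lfloor n/d\rfloor$, pad as before, and track how the parameter $a'$ must be perturbed when part 2 of Theorem \ref{thm: my FLM stuff} is only available along integer dimensions. The rounding of the exponent $1-a'$ in $d^{1-a'}$ costs a factor $n^{o(1)}$. I expect that balancing the approximation error of the exponents against the implied constants is what yields the stated loss $n^{O(1/\sqrt{\log n})}$.
\end{itemize}
This bookkeeping, and checking that the padding does not spoil the inradius (a cube summand has inradius $1$, which is harmless after rescaling), is where I expect most of the care to be needed.
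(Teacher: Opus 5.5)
Your construction is correct, except for the padding step discussed below, and it takes a genuinely different route from the paper's.

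\textbf{Comparison with the paper.} The paper keeps a single Hanner polytope $P_n^a\subset\R^d$ and passes to a random section of codimension $\lfloor d^{\delta}\rfloor$. The facets and $(R/r)^2$ of that section come from the earlier results. The vertices of the section are bounded by $f_{\lfloor d^{\delta}\rfloor}(P_n^a)$, whose logarithm is computed through the tree expansion of the generating function (Theorem~\ref{thm: Hanner asymptotics}). For irrational $a$, the $d^{O(1/\sqrt{\log d})}$ loss comes from approximating $a$ by $p(Q,m)/Q$ with $Q\sim\sqrt n$, at the cost of $2^{O(Q)}$ factors.

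You instead use part 2 of Theorem~\ref{thm: my FLM stuff} as a black box together with the free-sum rules. Those rules are correct: facets of $K_1\oplus\cdots\oplus K_m$ correspond to vertices of $K_1^\circ\times\cdots\times K_m^\circ$, and $\oplus_i K_i\supset\oplus_i rB_2^{d}\supset (r/\sqrt m)B_2^{n}$. Your route in fact proves more than the statement. Your three bounds multiply to
\[ n^{a+\delta(1-a)}\cdot n^{1-a}\cdot n^{1-\delta(1-a)}=n^{2}. \]
So you get $(R_n/r_n)^2=O(n^{1-\delta+a\delta})$ and exact saturation of the FLM inequality at these parameters, with no $n^{o(1)}$ loss when $a\notin\Q$. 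What the paper's route buys instead is the face-number asymptotics of Theorem~\ref{thm: Hanner asymptotics}, which are of independent interest.

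\textbf{The arithmetic step.} What you flag as the main obstacle is not one, and parts of your treatment are wrong. Part 2 of Theorem~\ref{thm: my FLM stuff} holds for every real $a'\in(0,1)$ in every dimension, with constants depending only on $a'$. So nothing needs to be rational and no exponent is ever rounded. Put $\beta=a+\delta(1-a)$, $d=\lceil n^{\beta}\rceil$ and $m=\lfloor n/d\rfloor$. Split $n$ into $m$ blocks of sizes $\lfloor n/m\rfloor$ or $\lceil n/m\rceil$, all $\asymp n^{\beta}$, and take the free sum of part-2 polytopes of those dimensions. All bounds then hold up to constants depending on $a,\delta$.

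\textbf{The padding step.} Padding with a cube or cross-polytope of dimension $s$ can break the bounds.
\begin{itemize}
\item Both have $(R/r)^2=s$. This is scale invariant and bounds $(R/r)^2$ of the whole free sum from below, because the free sum meets the span of a summand exactly in that summand. So ``harmless after rescaling'' is false.
\item The cross-polytope also adds $s\log 2$ to $\log|\mathcal F|$.
\item In your $N^q$-grid scheme, $s$ can be of order $n^{1-1/q}$. For example, $a=1/2$, $\delta=9/10$ gives $q=20$, and $n^{19/20}$ exceeds both $n^{1-a}$ and $n^{1-\delta+a}$.
\item Even with $s<d$, as in your irrational-case scheme, a cube summand violates $(R/r)^2=O(n^{1-\delta+a})$ whenever $\delta(2-a)>1$.
\end{itemize}
If you pad at all, pad with a part-2 polytope of dimension $s<d$, which contributes no more than any other summand.
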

\begin{rem}\label{remark: extending to all dimensions}
    Our proof only provides a sequence of polytope $P_{n} \subset \R^{d_n}$ where $d_n \sim 2^n$. One can extend this sequence to a family of polytopes in any dimension with the same asymptotics by standard techniques (see the proof of Proposition 4.1 in \cite{Mil}).
\end{rem}

Our basic family of polytopes (referred to as the \emph{basic polytopes} from now on), which was presented in \cite{Mil}, is the following. Fix $a \in (0,1)$. Let $A_a = \{ \lfloor\frac{n}{a} \rfloor : n \in \N\} $, so that the density of $A_a$ in $\N$ is $a$. We define a sequence of polytopes $P_{n}^a \subset \R^{2^n}$ inductively: Let $P_0 = [-1,1]$, and given $P_{n-1}^a \subset \R^{2^{n-1}}$ we let
\[ P_{n}^a = \begin{cases} P_{n-1}^a \times P_{n-1}^a, \: n \in A_a \\
\conv(P_{n-1}^a, \: P_{n-1}^a), \: n \notin A_a \\
\end{cases} \]
where in each step, we orthogonally decompose $\R^{2^n} = \R^{2^{n-1}} \times \R^{2^{n-1}}$, and take two copies of $P_{n-1}^a$, one in each copy of $\R^{2^{n-1}}$, and perform the operation (product or convex hull) on these two copies. \\
The basic polytopes satisfy part $1$ of Theorem \ref{thm: my FLM stuff}. The basic polytopes can be `extended' from dimensions that are powers of $2$ to all dimensions; see Proposition 4.1 in \cite{Mil} for details. Parts 2 and 3 are satisfied by sections of the basic polytopes, as will be shortly explained. 
\begin{rem}
    The basic polytopes $P_n^a$ can be thought of as a continuous interpolation between the cube and the $l_1$ ball -- indeed, $a=0$ gives the $l_1$ ball, while for $a=1$ we get the cube.
\end{rem}

If $P \subset \R^d$ is $d$-dimensional, we denote by $f_{k}(P)$ the number of $k$ dimensional faces of $P$ (so $k \in \{0,1,\ldots,d-1\}$, $f_{0}(P) = |V|$ and $f_{d-1}(P) = |\mathcal{F}|$). \\
A basic fact which will be repeatedly used in this note is the following: If $E \subset \R^d$ is a generic $(d-k)$-dimensional subspace, then 
\begin{equation}\label{eq1} f_{k}(P \cap E) \leq f_{d-k}(P) \end{equation}
That is to say, $k$-dimensional faces of a generic section can only arise from $d-k$ dimensional faces of $P$.\\
In (\cite{Mil}, Proposition 4.6) it was shown that random sections of $P_n^a$ of dimension $\frac{d}{2}$ satisfy part 2 of Theorem \ref{thm: my FLM stuff} (with high probability). Part 3 of Theorem \ref{thm: my FLM stuff} was obtained by taking $d-  \lfloor d^{\delta}\rfloor $-dimensional sections of the basic polytopes. Using the crude bound $f_{k}(P) \leq \binom{f_0(P)}{k+1}$, the bound $O(d^{a+\delta})$ on the logarithm of number of vertices of the section was obtained by (\ref{eq1}) via the crude upper bound
\[ \log f_{\lfloor d^{\delta} \rfloor}(P_n^a) \leq \log \binom{f_0(P_n^a)}{\log \lfloor d^{\delta}\rfloor +1} = O (d^{\delta +a}) \]
Theorem \ref{thm: main FLM improvment} improves part 3 of Theorem \ref{thm: my FLM stuff} by providing precise asymptotics for $\log_{\lfloor d^{\delta} \rfloor}f(P_n^a)$ instead of using the crude bound. By the discussion above, Theorem \ref{thm: main FLM improvment} will be an immediate corollary of the following

\begin{thm}\label{thm: Hanner asymptotics}
    Let $P_n^a \subset \mathbb{R}^{2^n}$ be the basic polytopes. Let $d = 2^n$ denote the dimension. For all $\delta \in (0,1)$,
    
 \noindent   1. If $a \in \Q$, then 
    \[ \log f_{\lfloor d^{\delta}\rfloor}(P_n^a) \sim d^{a + \delta(1-a)}. \]

    \noindent 2. If a $\notin \Q$, then 
    \[ \log f_{\lfloor d^{\delta}\rfloor}(P_n^a) = d^{a + \delta(1-a) + o_d(1)}. \]
Concretely, we may take the $o_d(1)$ term to be $O(\frac{1}{\sqrt{\log d}})$.
All the implied constants in both of the statements depend only on $a$ and $\delta$. 
\end{thm}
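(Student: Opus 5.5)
The plan is to turn the product/free-sum structure of $P_n^a$ into a recursion for $L_n(k):=\log f_k(P_n^a)$, and then to solve that recursion: a lower bound from an explicit face-building strategy, and an upper bound by induction on $n$.

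\textbf{Step 1 (face lattices of the two operations).} The nonempty faces of $P\times Q$ are exactly $F\times G$ with $F,G$ nonempty faces, possibly improper. Hence
\[ f_k(P\times Q)=\sum_{i+j=k} f_i(P)f_j(Q), \]
where $f_{\dim P}(P)=1$ is allowed. The proper faces of $\conv(P,Q)$, with $P,Q$ in orthogonal subspaces and containing the origin in their relative interiors, are $\conv(F,G)$ with $F,G$ proper faces or empty, not both empty. Hence
\[ f_k(\conv(P,Q))=\sum_{i+j=k-1} f_i(P)f_j(Q), \]
with $f_{-1}=1$. Since $P_{n-1}^a$ has at most $2^{n-1}$ relevant dimensions, the sum has at most $d$ terms. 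Therefore
\[ L_n(k)=\max_{i+j=k \text{ or } k-1}\bigl(L_{n-1}(i)+L_{n-1}(j)\bigr)+O(\log d). \]
Over the $n=\log_2 d$ steps this accumulates an additive error of $O(\log^2 d)$, which is negligible against $d^{a+\delta(1-a)}$. The only asymmetry between the two step types is in the degenerate choices: $j=-1$ is allowed only in sum steps, and $j=\dim$ only in product steps. Concretely, $L_n(0)=2L_{n-1}(0)$ at product steps and $L_n(0)=L_{n-1}(0)+\log 2$ at sum steps. This gives $L_m(0)\sim 2^{|A_a\cap[1,m]|}\sim 2^{am}$, which is the known vertex count.

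\textbf{Step 2 (lower bound by an explicit strategy).} Set $k=\lfloor d^\delta\rfloor$ and $m=n-\lceil \delta n\rceil$, so that $2^{n-m}\approx k$. In the top $n-m$ steps I split the dimension as evenly as possible at every step, product or sum. The $-1$ lost at sum steps is absorbed by choosing some copies to be edges instead of vertices. This realizes $k$-faces of $P_n^a$ as products and joins of roughly $2^{n-m}\approx k$ faces of copies of $P_m^a$, each of dimension $0$ or $1$. This gives
\[ L_n(k)\gtrsim 2^{n-m}L_m(0)\gtrsim k\cdot 2^{am}\approx d^{\delta}d^{a(1-\delta)}=d^{a+\delta(1-a)}, \]
with $2^{am}$ replaced by $2^{|A_a\cap[1,m]|}$.

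\textbf{Step 3 (upper bound by induction).} I would look for a function $h_n(k)$, essentially
\[ h_n(k)\approx k\,2^{N_a(n-\log_2 k)}, \qquad N_a(j)=|A_a\cap[1,j]|, \]
interpolated to be concave in $k$ and equal to $L_n(0)$ at $k=0$. The goal is to check that $h_n$ dominates the recursion. This requires
\[ h_{n-1}(i)+h_{n-1}(j)\le h_n(k) \]
for every admissible split, including the degenerate ones with $j\in\{-1,\dim\}$. For nondegenerate splits I would use concavity, which pushes the maximum to the even split. For degenerate splits I would use the monotonicity $h_n(k)\ge h_{n-1}(k)+\log 2$.

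\textbf{Main obstacle.} I expect the main difficulty to be making this self-similar comparison exact. For $a\in\Q$ the set $A_a$ is periodic with period $q$, where $a=p/q$. One can then run the induction in blocks of $q$ steps, in which exactly $p$ are product steps, and get a bound with a uniform constant. This gives $L_n(k)\sim d^{a+\delta(1-a)}$. For irrational $a$, the block structure is lost. The plan there is to approximate $a$ by rationals $p/q$ with $q\approx\sqrt{\log d}$ and to run the rational argument with blocks of length $q$. The multiplicative losses per block, of size $2^{O(1)}$ over $n/q$ blocks, together with the approximation error $|a-p/q|\cdot n$ in the exponent, would both be $O(\sqrt{\log d})$ in $\log_2$. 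Together they give the factor $d^{O(1/\sqrt{\log d})}$.
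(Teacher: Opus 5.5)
\textbf{Verdict: the upper bound, the substantive half of the theorem, is not established.} Your lower bound in Step 2 is sound. It is essentially the paper's explicit tree $T_m$: split fully in the top $\approx\log_2 k$ levels and take vertices of the copies of $P_m^a$ below. One small fix: with $m=n-\lceil\delta n\rceil$ the number of joins in the top levels can exceed $k$, so take $m$ one or two larger. Bounded leaf dimensions then cost only constants, since each vertex lies in some $j$-face with at most $2^j$ vertices, so $f_j\ge 2^{-j}f_0$.

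\textbf{First problem: the error bookkeeping in Step 1 is wrong.} $L_n(k)$ is a sum $L_{n-1}(i)+L_{n-1}(j)$ of two terms, each carrying its own loss. So the per-step $O(\log d)$ losses accumulate over the nodes of a binary recursion tree, not along a chain of length $n$. Done naively, this gives $O(d\log d)$, which swamps $d^{a+\delta(1-a)}$. It is rescued only by two observations:
\begin{itemize}
\item nodes with face-dimension argument $0$ are exact;
\item at each level the positive arguments sum to at most $k$,
\end{itemize}
so the total loss is $O(k\log^2 d)$.

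\textbf{Second problem: your dominating function has no slack.} For $h_n(k)=k\,2^{N_a(n-\log_2 k)}$, the identity $2h_{n-1}(k/2)=h_n(k)$ holds whatever the step type. Grouping into blocks of $q$ steps changes nothing, because the even $2^q$-way split is just as critical. So the induction cannot absorb the losses.

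It also fails to cover the splits with $j=0$ (a face times, or joined with, a vertex). Your monotonicity remark only treats $j\in\{-1,\dim\}$. At a product step the $j=0$ split needs $h_n(k)-h_{n-1}(k)\ge L_{n-1}(0)$. Yet $h_n(2^j)=h_{n-1}(2^j)$ whenever $n-j\notin A_a$. Forcing concavity through $(0,L_n(0))$ does not cleanly help: at sum steps $2L_{n-1}(0)>L_n(0)$, which breaks the scaling inequality near $0$.

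So your claim that periodicity gives a uniform constant for rational $a$ is unsupported. It also sits uneasily with the $2^{O(1)}$ per-block loss you budget for irrational $a$.

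\textbf{The missing idea is a global count.} This is what the paper's lemma $Q(T)\le k$ and its leaf recursion $N_{j+1}\le 2^pN_j+2^QQ_j$ express in tree language. Unfold the recursion completely. Level $\ell$ has at most $\min(k,2^{n-\ell})$ nodes with positive argument. Each such node has at most two vertex children, and each vertex child contributes $\log f_0(P^a_{\ell-1})=O_a(2^{a\ell})$. Hence
\[ \log f_k(P_n^a)\le O(k\log^2 d)+O_a\Bigl(\sum_{\ell=1}^{n}\min(k,2^{n-\ell})\,2^{a\ell}\Bigr)=O_a\bigl(d^{a}k^{1-a}+k\log^2 d\bigr). \]

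Equivalently, your induction does close if you take
\[ h_n(k)=C2^{an}k^{1-a}+Bnk\log(k+2) \quad (k\ge 1,\ C \text{ large}), \]
and treat $h_n(0)=L_n(0)$ separately. The inequality $i^{1-a}+j^{1-a}\le 2^a(i+j)^{1-a}$ handles $i,j\ge 1$. The condition $C(2^a-1)2^{a(n-1)}\ge L_{n-1}(0)$ handles $j=0$.

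Finally, $\log f_0(P_m^a)=\Theta_a(2^{am})$ for \emph{every} $a$, since $|A_a\cap[1,m]|=am+O(1)$. So this route, combined with your Step 2, gives $\log f_{\lfloor d^\delta\rfloor}(P_n^a)\sim d^{a+\delta(1-a)}$ for irrational $a$ as well. Neither blocks nor rational approximation is needed.
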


\section{Face numbers as a recursion sequence}
For a fixed $a  \in (0,1) $, let $P_n^a \subset \R^{d}$ be the basic polytopes corresponding to $a$, where $d = d_n = 2^n$. We let
\[ a_{n,k} = f_{k}(P_n^a), \: k = 0,1,\ldots,d-1 \]
Note that $a_{0,0} = 2$ and $a_{0,1} = 1$, since a segment has $2$ vertices and one facet (edge).
From the definition of the basic polytopes, the sequence $a_{n,k}$ satisfies the following recursion. 
\begin{lem} for all $n,k \geq 0$,
    \[ a_{n+1,k} = \begin{cases} \sum_{j=0}^k a_{n,j}a_{n,k-j}, \: n \in A_a \\ 2a_{n,k} +\sum_{j=0}^{\min\{d-1,k-1\}} a_{n,j}a_{n,(k-1)-j}, \: n \notin A_a
    \end{cases} \]
where $a_{n,d} = a_{n,-1} := 1 $ for all $n \geq 0$, and $a_{n,k} :=0$ whenever $k < -1 $ or $k > d$.
\end{lem}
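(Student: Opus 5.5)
We prove the lemma by describing the face lattices of the two operations explicitly. Throughout, $P$ and $Q$ are full-dimensional polytopes, $P \subset \R^{p}$ and $Q\subset\R^{q}$, each containing the origin in its interior (here both are copies of $P_n^a$ with $p=q=d=2^n$). We also count the empty face (dimension $-1$) and the whole polytope (dimension $d$) as faces, which is exactly what the conventions $a_{n,-1}=a_{n,d}=1$ encode.

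\emph{Product step ($n\in A_a$).} The nonempty faces of $P\times Q$ are exactly the sets $F\times G$ with $F$ a nonempty face of $P$ and $G$ a nonempty face of $Q$. Indeed, a linear functional $(u,v)$ is maximized on $P\times Q$ exactly on $\arg\max_P u \times \arg\max_Q v$. Conversely, every such product $F\times G$ is exposed by the sum of the exposing functionals. Distinct pairs $(F,G)$ give distinct faces, and $\dim(F\times G)=\dim F+\dim G$. We may allow $F=P$ or $G=Q$, which are counted by $a_{n,d}=1$. So the number of $k$-faces for $0\le k\le 2d-1$ is $\sum_{j} a_{n,j}a_{n,k-j}$, with $j$ and $k-j$ ranging over $0,\dots,d$. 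Since $a_{n,j}=0$ for $j>d$, this is $\sum_{j=0}^{k}a_{n,j}a_{n,k-j}$. The terms with $j=-1$ or $k-j=-1$ would involve empty faces; these are excluded because the sum starts at $j=0$ and $a_{n,k+1}$ never appears.

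\emph{Convex hull step ($n\notin A_a$).} This is the dual (free sum) operation, and $\conv(P\times\{0\}\cup\{0\}\times Q)$ has polar $P^\circ\times Q^\circ$. Its proper faces are the sets $\conv(F\cup G)$ with $F$ a proper face of $P$ and $G$ a proper face of $Q$, allowing either to be empty but not both. To see this, take a supporting functional $(u,v)$ with value $1$ on the polytope. It attains its maximum $\max(h_P(u),h_Q(v))=1$; the exposed face is $\conv(F\cup G)$, where $F=\arg\max_P u$ if $h_P(u)=1$ and $F=\emptyset$ otherwise, and similarly for $G$. Since $0$ is interior, $F$ and $G$ lie in complementary subspaces and are affinely independent. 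Hence $\dim\conv(F\cup G)=\dim F+\dim G+1$, using $\dim\emptyset=-1$. Conversely, every pair of proper faces, with at least one nonempty, arises this way. So for $0\le k\le 2d-1$ the number of $k$-faces is
\[\sum_{j} a_{n,j}a_{n,k-1-j},\]
taken over $j,\,k-1-j\in\{-1,\dots,d-1\}$, not both $-1$.

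The terms with $j=-1$ or $k-1-j=-1$ contribute $a_{n,k}\cdot 1$ each, which gives the $2a_{n,k}$ term (faces of a single copy). The remaining terms have $0\le j\le\min\{d-1,k-1\}$. Here the cap at $d-1$ ensures that the whole $P$ is not used; the symmetric cap for $Q$ is automatic because $a_{n,d}=1$ only appears if $k-1-j=d$, which would need $j=k-1-d$. This last point is where the bookkeeping must be checked carefully against the stated conventions.

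The main obstacle is the free-sum face description, and specifically the boundary conventions. For the convex hull, the whole polytopes $P$ and $Q$ must be excluded; for the product, the empty faces must be excluded. I would verify the index ranges against the small case $P_0=[-1,1]$: the product gives the square with $4,4$ faces, and the convex hull gives the diamond with $4,4$ faces. For the $k-1-j=d$ term I would double-check whether the upper index cap should be symmetric, and adjust the statement's summation range if necessary, since the formula as written might include the term $a_{n,k-1-d}\,a_{n,d}$.
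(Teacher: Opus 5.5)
Your face-lattice descriptions are correct, and they are exactly what the paper's brief justification appeals to. Nonempty faces of $P\times Q$ are the products $F\times G$. Proper faces of the free sum are the joins $\conv(F\cup G)$, of dimension $\dim F+\dim G+1$. Your product-step bookkeeping is also fine.

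The gap is the final step of the convex-hull case, where you identify your (correct) count with the displayed formula. Your count is the sum of $a_{n,j}a_{n,k-1-j}$ over $j,\,k-1-j\in\{-1,\dots,d-1\}$. Both identifications you make there fail once $k\ge d$.
\begin{itemize}
\item The terms $j=-1$ and $j=k$ exist only when $k\le d-1$, since the other index then equals $k$. So for $k=d$ your count has no such terms, while the formula contains $2a_{n,d}=2$.
\item The $Q$-side cap is \emph{not} automatic. For $d+1\le k\le 2d$ the index $j=k-1-d$ lies in $[0,\min\{d-1,k-1\}]$. The displayed sum therefore contains $a_{n,k-1-d}\,a_{n,d}=a_{n,k-1-d}$, which counts the non-faces $\conv(F\cup Q)$.
\end{itemize}
This cannot be repaired by checking more carefully, because the stated identity is false in this range. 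Your own proposed sanity check already shows it: for $P_0=[-1,1]$ ($d=1$) and $k=1$, the formula gives $2a_{0,1}+a_{0,0}^2=2+4=6$, but the diamond has $4$ edges. Similarly, the free sum of two squares is the $4$-dimensional cross-polytope, with $f$-vector $(8,24,32,16)$. The formula, fed $a_{n,0}=a_{n,1}=4$ and $a_{n,2}=1$, gives $34$ at $k=2$ and $20$ at $k=3$. For the same reason, the paper's side remark $a_{n+1,2d-1}=a_{n,d-1}^2$ does not follow from the formula as written.

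What your argument actually proves is:
\begin{itemize}
\item the product formula, for all $k\ge 0$;
\item the convex-hull formula, only for $0\le k\le d-1$ with $d=2^n$;
\item for $d\le k\le 2d-1$, the count $\sum_{j=\max\{0,k-d\}}^{\min\{d-1,k-1\}}a_{n,j}a_{n,k-1-j}$ with no $2a_{n,k}$ term, and $a_{n+1,2d}=1$.
\end{itemize}
You should state and prove the lemma in that corrected form, rather than deferring an ``adjustment'' of the index range to later.

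Note also that the paper's caveat ``assume $k\le d$'' should read $k\le d-1$. Because the recursion is iterated from $n=0$, this restriction must hold at every convex-hull step where it is applied, with $d=2^n$ for that step. That is not implied by $k=\lfloor d^\delta\rfloor$ for the final dimension.
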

The case $n \in A_a$ (product step) says that $k$-dimensional faces of a product are obtained by multiplying $j$-dimensional faces with $k-j$ dimensional faces. The case $n \notin A_a$ (convex hull step) says that a similar thing happens, only now the dimensions sum to $k-1$; the extra $2a_{n,k}$ term follows form the fact that a $k$-dimensional face of one polytope remains a $k$-dimensional face after the convex hull operation. 
\\
 In the convex hull case, the sum goes up to $\min\{k-1,d-1\}$ so that we have $a_{n+1,2d-1} = a_{n,d-1}^2$, but since in this note we only consider $k = d^{\delta}$ for $\delta \in (0,1)$, we may ignore that technical requirement, and indeed we will assume from now on that $k \leq d$.
 \\
\noindent The following remark collects two easy properties of the sequence $a_{n,k}$, for future use.

\begin{rem}\label{remark: increasing sequence}
    1. From the definition of $a_{n,k}$, we have
    \[ a_{n+1,k} \geq a_{n,k} \]
    for all $n,k$.
    \\

    \noindent 2. Let $A_{n,k} = \max_{j \leq k} a_{n,j}$. Simple induction shows that
    \[ a_{n+r,k} \leq k^{2^{r}} A_{n,k}^{2^r} \]
\end{rem}
\smallskip

Let $F_n(t) = \sum_{k \geq 0} a_{n,k} t^k$ be the generating function of the sequence $a_{n,k}$. Clearly, $F_n$ satisfies the following recursion:
\[
    F_0(t) = 2+t, \quad F_{n+1}(t) = \begin{cases} F_n(t)^2, \: n \in A_a \\ tF_n(t)^2 + 2F_n(t), \: n \notin A_a
    \end{cases}
\]
We fix some notation that will be used throughout the proof of Theorem \ref{thm: Hanner asymptotics}. Let $Q$ be a positive integer and let $m = \lfloor \frac{n}{Q} \rfloor$ (we reveal to the reader in advance that in the case that $a=\frac{p}{q} \in \Q$ we will choose $Q = q$, and in the case $a \not\in \Q$ we will choose $Q=Q(n) = \sqrt{n}$). Let
\[ H_m(t) = F_{Qm}(t). \]
Note that $a_{Qm,k}$ (the coefficient of $t^k$ in the expansion of $H_m$) is $f_{k}(P_{Qm}^a)$. \\

\noindent Let $R(x,t) = tx^2 +2x$ and $S(x) = S(x,t) = x^2$. Then, we may write
\begin{equation}\label{eq2} H_{m+1}(t) = \phi_{Q,m}(H_m(t),t),\end{equation}
where 
\[ \phi_{Q,m}(x) = T_{Qm+Q-1} \circ T_{Qm+Q-2} \circ \ldots\circ T_{Qm} \:, \]
where $T_{j}$ is either $S$ or $R$, depending on whether $j \in A_a$ or not. Let \\ $p(Q,m) \in \{ \lfloor aQ\rfloor, \lceil aQ \rceil \}$ denote the number of applications of $S$ in the definition $\phi_{Q,m}$. The following Lemma is straightforward to prove -- it collects some basic facts about $\phi_{Q,m}$ we will repeatedly use.
\begin{lem}\label{lemma: facts about phi}
 There exists some finite $K \subset \Z_{>0}$ such that 
        \[ \phi_{Q,m}(x,t) = \sum_{k \in K}C_k(t)x^k, \]
where $C_k(t)$ are of the form $A_kt^{\lambda_k}$ for $A_k,\lambda_k \in \Z_{\geq 0}$. We have $\deg(\phi_{Q,m}) = 2^Q$, and $\lambda_k =0$ if and only if $k = 2^{p(Q,m)}$ (this is the monomial obtained by choosing the $t$-free term at each iteration).
\end{lem}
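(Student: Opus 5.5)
The plan is to prove everything by induction on the number of maps composed in $\phi_{Q,m}=T_{Qm+Q-1}\circ\cdots\circ T_{Qm}$, expanding the composition as a sum over ``choice trees''. The innermost map is $T_{Qm}$, and we peel off one outer map at a time. Write $\psi_j=T_{Qm+j-1}\circ\cdots\circ T_{Qm}$. Both $S(x)=x^2$ and $R(x,t)=tx^2+2x$ are polynomials in $x$ with coefficients in $\Z_{\ge 0}[t]$, and $x$ is never raised to the power $0$. The parts of Lemma \ref{lemma: facts about phi} other than monomiality follow directly:
\begin{itemize}
\item Since $\psi_{j+1}$ is $\psi_j^2$ or $t\psi_j^2+2\psi_j$, each $\psi_j$ has the form $\sum_{k\in K}C_k(t)x^k$ with $K\subset\Z_{>0}$ finite and $C_k\in\Z_{\ge0}[t]$.
\item The top $x$-degree doubles at each step with a nonzero coefficient, so $\deg_x\phi_{Q,m}=2^Q$.
\item Setting $t=0$ kills the term $tx^2$ in $R$, so $\phi_{Q,m}(x,0)=S^{p(Q,m)}(2^{\cdot}x)$ is a single monomial $A\,x^{2^{p(Q,m)}}$. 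Every other $x^k$ therefore has $C_k(0)=0$, and $2^{p(Q,m)}$ is the only $t$-free exponent.
\end{itemize}

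For the monomial claim $C_k=A_kt^{\lambda_k}$, I would look for an affine invariant $\lambda=\alpha k-\beta$ carried through the induction. Suppose $\psi=\sum A_kt^{\alpha k-\beta}x^k$.
\begin{itemize}
\item Then $S(\psi)$ has $\lambda=\alpha k-2\beta$, since $\alpha$ is additive over products of monomials.
\item $R(\psi)=t\psi^2+2\psi$ contributes $t^{1+\alpha k-2\beta}$ and $t^{\alpha k-\beta}$ to $x^k$. These exponents agree exactly when $\beta=1$, and then the invariant $(\alpha,1)$ is preserved.
\end{itemize}
Since the identity $x$ admits $\alpha=\beta$ arbitrary, one can fix $\alpha$ so that $\beta=1$ at the first $R$. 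After that, the induction goes through as long as no $S$ sits between two $R$'s.

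I expect this last step to be the main obstacle, and in fact I believe it fails. An $S$ followed later by an $R$ doubles $\beta$ to $2$, which breaks the matching condition. Concretely, $S\circ R=t^2x^4+4tx^3+4x^2$, and then
\[ R\circ S\circ R \ \text{has}\ x^4\text{-coefficient}\ 16t+2t^2, \]
which is not a monomial. So I would only claim monomiality when the $S$'s in $\phi_{Q,m}$ all precede the $R$'s, or all follow them.

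In general I would replace the claim with the weaker statement that each $C_k(t)\in\Z_{\ge0}[t]$ has lowest $t$-exponent $\lambda_k$. The $t$-free part is exactly $k=2^{p(Q,m)}$, and the number of terms is bounded by $2^Q$. This should be what the later asymptotic arguments actually use, since they only need $C_k(t)\le C_k(1)\,t^{\lambda_k}$ for $t\le1$, or the dominant-term behaviour.
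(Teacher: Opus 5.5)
Your analysis is correct, and it catches a genuine error in the statement. The paper gives no proof (it calls the lemma straightforward), and its monomial claim is false. Your computation checks out:
\[ R\circ S\circ R = t^5x^8+8t^4x^7+24t^3x^6+32t^2x^5+(16t+2t^2)x^4+8tx^3+8x^2 . \]
This pattern also occurs in the paper's own setting. Take $a=2/5$ with the paper's choice $Q=q=5$. Then $A_a=\{2,5,7,10,12,\dots\}$, so every block $\{5m,\dots,5m+4\}$ with $m\ge 1$ gives $\phi_{5,m}=R\circ R\circ S\circ R\circ S$, whose $x^8$-coefficient is $96t+4t^2$. For irrational $a$ with $Q\sim\sqrt n$, every long block likewise contains an $S$ strictly between two $R$'s. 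The parenthetical remark in the lemma is exactly where the heuristic breaks: different sequences of choices can land on the same power of $x$ with different powers of $t$. Since all coefficients are nonnegative, nothing cancels.

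Your weakened version is what the later arguments actually use, and it is worth recording two extra facts in it.

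\emph{What the upper bound needs.} The lemma bounding $Q(T)$ only needs $C_k(0)=0$ for $k\neq 2^{p(Q,m)}$, reading $\deg$ there as the lowest $t$-order, so that $\mathrm{ord}_t W(T)\ge Q(T)$. The bound on $\log W(T)(1)$ only needs $C_k(1)\le \phi_{Q,m}(1,1)\le 2^{2^Q}$, which follows from $R(x,1)+1=(x+1)^2$.

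\emph{What the lower bound needs.} The tree $T_m$ uses only the degrees $2^{p(Q,m)}$ and $2^Q$, and those two coefficients are genuine monomials. Under $R$ the lowest power of $x$ is fed only by the term $2\psi$, and under $S$ it is squared, so $C_{2^{p(Q,m)}}$ is a positive constant (a power of $2$). The top coefficient is exactly $t^{\lambda}$, so $B=1$. Hence $W(T_m)$ is a monomial, and the bound $[t^{j^*}]W(T_m)\ge 1$ survives unchanged.

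One small point: your affine-invariant argument gives monomiality whenever no $S$ lies strictly between two $R$'s. So $S$'s may appear both before and after the run of $R$'s, which is slightly more than you claim.
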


\section{Solving the recursion for $H_m$ using trees}
The goal of this section is to solve the recursion (\ref{eq2}) for $H_m$. Let $\mathcal{T}_m^K$ denote the set of trees of height exactly $m$ (so that all leaves are of height $m$), and with each internal vertex of degree in $K$. For $T \in \mathcal{T}_m^K
$, let $L(T)$ denote the set of leaves of $T$, let $\Int(T)$ denote the set of internal vertices (i.e, non-leaves) of $T$, and let $\deg(v)$ denote the degree of the vertex $v$. Define the \emph{weight} of a tree to be
\[ W(T) := \prod_{v \in \Int(T)} C_{\deg(v)}(t) \]
with the convention $\Int(T) = \emptyset \implies W(T)=1$.
\begin{prop}\label{prop: H_n trees}
    For any $m \geq 0$, 
    \[ H_m(t) = \sum_{T \in \mathcal{T}_m^K} W(T)(2+t)^{L(T)}. \]
\end{prop}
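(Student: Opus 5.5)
We argue by induction on $m$, using the recursion (\ref{eq2}) together with the expansion of $\phi_{Q,m}$ from Lemma \ref{lemma: facts about phi}. Here $(2+t)^{L(T)}$ is read as $(2+t)^{|L(T)|}$. Trees are rooted and ordered, i.e. plane trees: the children of each internal vertex are ordered. This is the convention under which the identity holds with multiplicities, since expanding $x^k$ with $x=\sum_T(\cdots)$ produces ordered $k$-tuples of subtrees.

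\textbf{Base case.} For $m=0$ we have $H_0(t)=F_0(t)=2+t$. The set $\mathcal{T}_0^K$ consists of the single one-vertex tree, whose root is a leaf. It has $\Int(T)=\emptyset$, so $W(T)=1$, and $|L(T)|=1$. Hence the right-hand side equals $2+t$.

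\textbf{Inductive step.} Assume the formula holds for $m$. By (\ref{eq2}) and Lemma \ref{lemma: facts about phi},
\[ H_{m+1}(t)=\sum_{k\in K} C_k(t)\,H_m(t)^k = \sum_{k\in K} C_k(t) \sum_{(T_1,\ldots,T_k)\in (\mathcal{T}_m^K)^k} \prod_{i=1}^k W(T_i)(2+t)^{|L(T_i)|}. \]
The key step is a bijection between pairs $\bigl(k,(T_1,\ldots,T_k)\bigr)$ with $k\in K$ and trees $T\in\mathcal{T}_{m+1}^K$. Given such a pair, attach $T_1,\ldots,T_k$ in order as the children of a new root of degree $k$. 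Conversely, given $T\in\mathcal{T}_{m+1}^K$ with $m+1\ge 1$, its root is internal with some degree $k\in K$. Deleting the root leaves an ordered list of $k$ subtrees, each of height exactly $m$ with all internal degrees in $K$.

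Here ``degree'' must mean the number of children. The statement's convention must be read this way: if ``degree'' counted the parent edge, a non-root vertex with $k$ children would have degree $k+1$ and the weights would not match.

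\textbf{Matching weights and leaves.} Under this bijection the internal vertices of $T$ are the root together with the internal vertices of the $T_i$. Therefore
\[ W(T)=C_k(t)\prod_i W(T_i), \qquad |L(T)|=\sum_i |L(T_i)|. \]
Substituting these into the double sum gives exactly $\sum_{T\in\mathcal{T}_{m+1}^K} W(T)(2+t)^{|L(T)|}$, which completes the induction.

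There is one technical point: $\phi_{Q,m}$, and hence $K$ and the $C_k$, depend on $m$ when $Q\nmid$ the period of $A_a$. In that case the weight $C_{\deg(v)}$ should be understood as that of $\phi_{Q,j}$, where $j$ is determined by the level of $v$. Specifically, the root uses $\phi_{Q,m}$ at the last step, and a vertex at depth $i$ uses $\phi_{Q,m-i}$ (counting from the root at depth $0$). With this level-dependent reading, and $K$ taken as the union of all relevant exponent sets (note $0\notin K$, since $\phi$ has no constant term), the same induction goes through verbatim. I expect this bookkeeping of conventions to be the only real obstacle; the combinatorial content is the routine root-decomposition of plane trees.
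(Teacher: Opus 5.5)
Your proof is correct and is the paper's argument: induction on $m$, using the root decomposition of a height-$(m+1)$ tree into $k\in K$ ordered subtrees of height $m$, with $W(T)=C_k(t)\prod_i W(T_i)$ and $|L(T)|=\sum_i |L(T_i)|$. Your clarifications are legitimate points the paper's statement leaves implicit: trees are plane trees, degree means number of children, and when $\phi_{Q,m}$ varies with $m$ (notably for irrational $a$ with $Q\sim\sqrt{n}$) the weights must be taken level-dependently from $\phi_{Q,j}$, with $K$ the union of the exponent sets.
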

\begin{proof}
    The proof is by induction. The case $m=0$ is trivial. Assume the equality holds for some $m$. By definition,
    \[ H_{m+1}(t) = \phi(H_m(t),t) = \sum_{k \in K} C_k(t)H_m(t)^k. \]
    For a fixed $k\in K$, by the induction hypothesis
    \[ H_m(t)^k =  \left( \sum_{T \in \mathcal{T}_m^K} W(T) (2+t)^{L(T)} \right)^k = \sum_{(T_1,\ldots,T_k) \in (\mathcal{T}^K_n)^k} \prod_{i=1}^k W(T_i)(2+t)^{L(T_i)}.  \]
So,
\[ H_{m+1}(t) = \sum_{k \in K} C_k(t) \left( \sum_{(T_1,\ldots,T_k) \in (\mathcal{T}_m^K)^k} \prod_{i=1}^k W(T_i)(2+t)^{L(T_i)} \right). \]
Now, let $T \in \mathcal{T}_{m+1}^K$ be such that $\deg(root_T) = k$. Let $T_1,\ldots,T_k \in \mathcal{T}_m^K$ be the height-$m$ trees starting at each of the $k$ children of the root of $T$. Then, $L(T) = \sum_{i=1}^k L(T_i)$, and $\Int(T) = 1 + \sum_{i=1}^k \Int(T_i)$. Therefore
\[ W(T)(2+t)^{L(T)} = C_k(t) \prod_{i=1}^k W(T_i) (2+t)^{L(T_i)}. \]
We conclude
\[ \sum_{T \in \mathcal{T}_{m+1}^K} W(T)(2+t)^{L(T)} = \sum_{k \in K} \sum_{\substack{T \in \mathcal{T}_{m+1}^K \\ \deg(root_T) = k}} C_k(T) \prod_{i=1}^k W(T_i)(2+t)^{L(T_i)} = \]
\[ = \sum_{k \in K} \sum_{\substack{T \in \mathcal{T}_{m+1}^K \\ \deg(root_T) = k}} W(T)(2+t)^{L(T)} = \sum_{T \in \mathcal{T}_{m+1}^K} W(T) (2+t)^{L(T)} \]

\end{proof}

\begin{cor}\label{cor: coefficient formula} We have the following expression for the coefficients of $H_m$:
    \[ a_{Qm,k} = [t^k]H_m(t) = \sum_{j=0}^k \sum_{T \in \mathcal{T}_m^K } [t^j]W(T) \cdot \binom{L(T)}{k-j}2^{L(T)-(k-j)}  \]

\end{cor}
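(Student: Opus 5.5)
The corollary follows by extracting the coefficient of $t^k$ from the identity of Proposition \ref{prop: H_n trees}. Since $a_{Qm,k}=f_k(P^a_{Qm})$ is by definition the coefficient of $t^k$ in $F_{Qm}=H_m$, it suffices to compute $[t^k]$ of the right-hand side
\[ \sum_{T\in\mathcal{T}_m^K} W(T)(2+t)^{L(T)}, \]
where, as in that proposition, $L(T)$ in an exponent stands for the number of leaves $|L(T)|$.

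I would first check that this sum is a legitimate finite polynomial identity, so that coefficients can be taken termwise. The set $\mathcal{T}_m^K$ is finite, because $K$ is finite (Lemma \ref{lemma: facts about phi}) and all trees have height exactly $m$. Each $W(T)$ is a product of monomials $C_{\deg(v)}(t)=A_{\deg(v)}t^{\lambda_{\deg(v)}}$, so it is itself a monomial in $t$ with nonnegative integer coefficient. Linearity of $[t^k]$ then gives
\[ [t^k]H_m(t)=\sum_{T\in\mathcal{T}_m^K}[t^k]\bigl(W(T)(2+t)^{L(T)}\bigr). \]

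Next I would apply the Cauchy product rule for a product of two polynomials: $[t^k](fg)=\sum_{j=0}^k [t^j]f\cdot[t^{k-j}]g$. I take $f=W(T)$ and $g=(2+t)^{L(T)}$. By the binomial theorem,
\[ [t^{i}](2+t)^{L} = \binom{L}{i}2^{L-i}. \]
This formula is valid for every $0\le i\le k$ under the convention $\binom{L}{i}=0$ when $i>L$, so the out-of-range terms vanish automatically. Substituting $i=k-j$ and exchanging the two finite sums gives exactly the displayed expression.

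There is no real obstacle here. The only points needing care are the notational conventions just described: the finiteness of $\mathcal{T}_m^K$, and the binomial convention for $k-j>L(T)$. Both make every term in the formula well defined.
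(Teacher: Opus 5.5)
Your proof is correct and matches the derivation the paper leaves implicit: take $[t^k]$ of the identity in Proposition~\ref{prop: H_n trees} term by term over the finite set $\mathcal{T}_m^K$, expand $(2+t)^{L(T)}$ by the binomial theorem, and apply the Cauchy product. Note that you only need each $W(T)$ to be a polynomial in $t$, not a monomial, and the monomial form stated in Lemma~\ref{lemma: facts about phi} can fail (e.g.\ $R(S(R(x)))$ has $x^4$-coefficient $2t^2+16t$, and this pattern occurs inside $\phi_{Q,m}$ for $a=2/5$, $Q=5$), so you should drop that claim, although it does not affect the corollary.
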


\noindent Our goal now (in order to prove Theorem \ref{thm: Hanner asymptotics}) is to estimate $a_{Qm,k}$. The lower and upper bounds would require different techniques. We start with
 \section{The Upper Bound}
 The goal of this section is to use the expression obtained in Corollary \ref{cor: coefficient formula} to upper bound $a_{Qm,k}$. We prove
 \begin{prop}\label{prop: upper bound coefficients}
     Let $a_{Qm,k}$ be as in Corollary \ref{cor: coefficient formula}. Then
     \[ \log a_{Qm,k} = O_a(2^{O(Q)}2^{np(Q,m)}k^{1-\frac{p(Q,m)}{Q}}) \]
 \end{prop}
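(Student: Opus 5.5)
The plan is to bound every quantity in Corollary \ref{cor: coefficient formula} by the size of the trees that actually contribute to $[t^k]H_m$. Write $p=p(Q,m)$, and read the right-hand side with $2^{mp}$, since $n\approx Qm$ and the target is $d^{a}k^{1-a}\approx 2^{mp}k^{1-p/Q}$. The key observation is that $[t^j]W(T)\neq 0$ forces $\sum_{v\in\Int(T)}\lambda_{\deg(v)}=j\le k$. By Lemma \ref{lemma: facts about phi}, $\lambda_{\deg(v)}=0$ exactly when $\deg(v)=2^{p}$. So a contributing tree has at most $k$ \emph{nonstandard} internal vertices (degree $\neq 2^p$, and degree at most $2^Q$), and every other internal vertex has degree exactly $2^p$.

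\textbf{Step 1: bound the number of leaves.} Let $N_\ell$ be the number of vertices at level $\ell$, and $j_\ell$ the number of nonstandard vertices at level $\ell$, so that $\sum_\ell j_\ell\le k$. Then
\[ N_{\ell+1}\le 2^pN_\ell+2^Q\min(N_\ell,j_\ell),\qquad N_\ell\le 2^{Q\ell}. \]
Unrolling this gives
\[ L(T)=N_m\le 2^{pm}\Bigl(1+2^Q\sum_{\ell}2^{-p(\ell+1)}\min(2^{Q\ell},j_\ell)\Bigr). \]
To maximize $\sum_\ell 2^{-p\ell}\min(2^{Q\ell},j_\ell)$ subject to $\sum_\ell j_\ell\le k$, put the budget on the lowest levels, filling level $\ell$ completely while $2^{Q\ell}\lesssim k$. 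Up to the level $r$ with $2^{Qr}\approx k$ the terms form a geometric series dominated by its last term $2^{(Q-p)r}\approx k^{1-p/Q}$. Beyond level $r$ the factor $2^{-p\ell}$ only shrinks the leftover budget. Hence
\[ L(T)\le 2^{O(Q)}2^{pm}k^{1-p/Q}. \]
Since $a>0$, we have $p\ge1$ once $Q$ is large in terms of $a$; the case of small $Q$ goes into the $O_a$. With $p\ge 1$, the level sizes grow at least geometrically, so $|\Int(T)|\le\sum_\ell N_\ell=O(N_m)$ as well.

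\textbf{Step 2: bound the remaining factors by $\exp$ of the same quantity.}
\begin{itemize}
\item The binomial factor satisfies $\binom{L}{k-j}2^{L-(k-j)}\le 3^{L}$.
\item Each coefficient $A_k$ of $\phi_{Q,m}$ is at most $\phi_{Q,m}(1,1)\le 3^{2^{Q}}$, so $[t^j]W(T)\le W(1)\le 3^{2^Q|\Int(T)|}$.
\item The number of trees with a given shape bound is at most $(|K|)^{|\Int(T)|}\le (2^Q+1)^{|\Int(T)|}$, because a tree is determined by the degrees of its internal vertices, read level by level.
\item The sum over $j\le k$ costs only a factor $k+1$.
\end{itemize}
Taking logarithms in Corollary \ref{cor: coefficient formula} and using Step 1 gives
\[ \log a_{Qm,k}\le \log(k+1)+2^{O(Q)}\,O(L_{\max})=O_a\bigl(2^{O(Q)}2^{mp}k^{1-p/Q}\bigr), \]
where $L_{\max}$ is the leaf bound from Step 1.

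The main obstacle is Step 1, the extremal leaf count. One has to check that the greedy allocation is optimal; this holds because $\min(2^{Q\ell},\cdot)$ caps each level and $2^{-p\ell}$ is decreasing. One also has to check that the geometric sums only cost $2^{O(Q)}$, which uses $Q-p\ge 1$ and $p\ge1$ so that both series are dominated by their extreme terms. If the level-by-level recursion proves awkward, the fallback is an induction on $m$ for $\max L$ over trees with at most $j$ nonstandard vertices, with potential function $C\,2^{pm}(j+1)^{1-p/Q}$, using concavity to split $j$ among the children.
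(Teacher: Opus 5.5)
Your proposal is correct and is essentially the paper's proof. You restrict to trees with at most $k$ vertices of degree $\neq 2^{p(Q,m)}$ and bound the leaves via the level recursion $N_{\ell+1}\le 2^{p}N_\ell+2^{Q}j_\ell$; your split at the level $r$ with $2^{Qr}\approx k$ plays the role of the paper's starting level $h$ with $N_h\le k$. You then bound $|\Int(T)|$ by $L(T)$ and absorb the weights, binomial factors and tree count into $\exp\bigl(2^{O(Q)}L_{\max}\bigr)$; your $3^{L}$ and breadth-first encoding are cosmetic variants of the paper's $4^{L}$ and preorder encoding.

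Drop the remark that small $Q$ ``goes into the $O_a$''. If $p(Q,m)=0$, then $\phi_{Q,m}=R^{\circ Q}$ (the cross-polytope recursion), and degree-one vertices make $|\Int(T)|$ as large as order $mL(T)$. The bound then genuinely fails, since $\log f_k\approx k\log(d/k)$ for the cross-polytope. Like the paper, which silently uses that every internal vertex has degree at least $2$, you must simply assume $p(Q,m)\ge 1$; this holds in all of the paper's applications.
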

\noindent This Proposition requires several Lemmas, starting with
\begin{lem}\label{lem: Q(T)}
    In the expression for $a_{Qm,k}$ obtained in Corollary \ref{cor: coefficient formula}, the only trees $T \in \mathcal{T}_m^K$ contributing non-trivially have
    \[ Q(T) := |\{v: \deg(v) \neq 2^{p(Q,m)}\}| \leq k.\]
\end{lem}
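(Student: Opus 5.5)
The argument only needs to track powers of $t$. By Corollary \ref{cor: coefficient formula}, a tree $T \in \mathcal{T}_m^K$ contributes to $a_{Qm,k}$ only through the terms $[t^j]W(T)$ with $0 \le j \le k$. Since every coefficient of every $C_\ell(t)$ and of $(2+t)^{L(T)}$ is a nonnegative integer, there is no cancellation. It therefore suffices to show the following: if $Q(T) > k$, then $W(T)$ is divisible by $t^{k+1}$, so $[t^j]W(T) = 0$ for every $j \le k$ and $T$ contributes nothing.

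To prove this, I use Lemma \ref{lemma: facts about phi}. Each $C_\ell(t)$ has the form $A_\ell t^{\lambda_\ell}$ with $\lambda_\ell \in \Z_{\ge 0}$, and $\lambda_\ell = 0$ exactly when $\ell = 2^{p(Q,m)}$. For every other $\ell \in K$, $\lambda_\ell$ is a positive integer, so $\lambda_\ell \ge 1$. The weight is a product over internal vertices, hence a monomial:
\[ W(T) = \Big(\prod_{v \in \Int(T)} A_{\deg(v)}\Big)\, t^{\sum_{v\in \Int(T)} \lambda_{\deg(v)}}. \]
Its exponent satisfies
\[ \sum_{v \in \Int(T)} \lambda_{\deg(v)} \;\ge\; |\{v \in \Int(T) : \deg(v) \ne 2^{p(Q,m)}\}| . \]
One convention needs checking. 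Leaves have no children, so if $\deg$ means out-degree, leaves fall into the set defining $Q(T)$ even though they carry no weight. The intended reading, consistent with the definition of $W(T)$, is that $Q(T)$ counts only internal vertices. Under that reading the displayed lower bound is exactly $Q(T)$. So if $Q(T) \ge k+1$, then $W(T) = c\, t^{e}$ with $e \ge k+1$, and $[t^j]W(T) = 0$ for all $j \le k$. Every term of the inner sum for such a $T$ then vanishes, which proves the lemma.

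The only real subtlety is this bookkeeping of which vertices are counted, together with the nonnegativity of the coefficients, which guarantees that bounding the power of $t$ really removes the contribution. Neither step requires computation beyond Lemma \ref{lemma: facts about phi}.
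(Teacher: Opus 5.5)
Your proof is correct and is the paper's argument: each internal vertex with $\deg(v)\neq 2^{p(Q,m)}$ contributes at least one factor of $t$ to $W(T)$, so $t^{Q(T)}$ divides $W(T)$ and $[t^j]W(T)=0$ for all $j\le k<Q(T)$. Your reading that $Q(T)$ counts only internal vertices is the intended one, and the paper's $\deg(v)\neq 2^Q$ in its displayed sum is a typo for $2^{p(Q,m)}$.

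One caution: the monomial form $C_\ell(t)=A_\ell t^{\lambda_\ell}$ you quote from Lemma~\ref{lemma: facts about phi} is false in general. For example, $R\circ S\circ R$ has $x^4$-coefficient $2t^2+16t$, so $W(T)$ need not be a monomial. Your argument, however, only needs $C_\ell(0)=0$ for $\ell\neq 2^{p(Q,m)}$. This holds because $\phi_{Q,m}(x,0)$ is a composition of $x\mapsto 2x$ and $x\mapsto x^2$, hence equals $c\,x^{2^{p(Q,m)}}$ for some constant $c\neq 0$. Replacing ``exponent of the monomial'' by ``lowest power of $t$'' makes the proof airtight.
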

\begin{proof}
    As explained in Lemma \ref{lemma: facts about phi}, for all $k \neq 2^{p(Q,m)}$ we have $\deg(C_k(t)) \geq 1$. But 
    \[ \deg(W(T)) = \sum_{v \in \Int(T)} \deg(C_{\deg(v)}) = \sum_{ \substack{v \in \Int(T) \\ \deg(v) \neq 2^{Q}}} \deg(C_{\deg(v)}). \]
and if $Q(T) >k$, this last sum is $>k$ as well. By Corollary \ref{cor: coefficient formula}, this $T$ cannot contribute to $a_{Qm,k}$.
\end{proof}
 \begin{lem}\label{lemma: W(T)(1) bound} For all $T \in \mathcal{T}_m^K$,
    \[ \log W(T)(1) = O(2^Q|\Int(T)|) \] 
\end{lem}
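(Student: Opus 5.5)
Since $W(T)(1) = \prod_{v \in \Int(T)} C_{\deg(v)}(1)$, taking logarithms gives
\[ \log W(T)(1) = \sum_{v \in \Int(T)} \log C_{\deg(v)}(1) \le |\Int(T)| \cdot \max_{k \in K} \log C_k(1). \]
So the lemma reduces to a uniform bound $\log C_k(1) = O(2^Q)$ for every $k \in K$. By Lemma \ref{lemma: facts about phi}, each $C_k(t) = A_k t^{\lambda_k}$ with $A_k \geq 0$, hence $C_k(1) = A_k$. Since all coefficients are nonnegative, $A_k \le \sum_{k' \in K} A_{k'} = \phi_{Q,m}(1,1)$. The task is therefore to show $\log \phi_{Q,m}(1,1) = O(2^Q)$.

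We bound $\phi_{Q,m}(1,1)$ by following the composition $\phi_{Q,m} = T_{Qm+Q-1}\circ\cdots\circ T_{Qm}$ evaluated at $x=1$, $t=1$. Set $x_0 = 1$ and $x_{i+1} = T_{Qm+i}(x_i,1)$, where each $T$ is either $S(x) = x^2$ or $R(x,1) = x^2 + 2x$. In both cases $x_{i+1} \le x_i^2 + 2x_i \le (x_i+1)^2$. Writing $y_i = x_i + 1$, we get $y_{i+1} \le (x_i+1)^2 + 1 \le 2y_i^2$ when $y_i \ge 1$. Hence $\log y_{i+1} \le 2\log y_i + \log 2$, and induction from $\log y_0 = \log 2$ gives $\log y_i \le (2^{i+1}-1)\log 2$. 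In particular,
\[ \log \phi_{Q,m}(1,1) \le \log y_Q = O(2^Q). \]
Alternatively, one can invoke Remark \ref{remark: increasing sequence}(2) in spirit: iterating the square-plus-linear map $Q$ times gives a doubly exponential bound $c^{2^Q}$.

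Combining these, each factor satisfies $\log C_{\deg(v)}(1) \le \log \phi_{Q,m}(1,1) = O(2^Q)$, which is uniform in $m$ and in $T$. Summing over the $|\Int(T)|$ internal vertices yields $\log W(T)(1) = O(2^Q |\Int(T)|)$. If $\Int(T) = \emptyset$, then $W(T) = 1$ and the bound holds trivially.

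There is no real obstacle here. The only point needing care is that the constant is uniform: it does not depend on $m$, or on which pattern of $S$ and $R$ steps occurs in $\phi_{Q,m}$. This holds because the recursion bound $x \mapsto (x+1)^2$ dominates both $S$ and $R$ at $t=1$.
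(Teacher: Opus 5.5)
Your proof is correct and follows the paper's route: bound each factor $C_{\deg(v)}(1)$ by $C=\max_{k\in K}C_k(1)$ and use a doubly exponential bound on $C$. The paper simply asserts $C\le 2^{2^Q}$; you justify it through $C_k(1)\le \phi_{Q,m}(1,1)$ and iterating $x\mapsto x^2+2x$, and the identity $x^2+2x=(x+1)^2-1$ gives exactly $\phi_{Q,m}(1,1)\le 2^{2^Q}-1$. Note that the step $C_k(1)\le\phi_{Q,m}(1,1)$ needs only nonnegativity of the coefficients, not the monomial form $C_k(t)=A_kt^{\lambda_k}$. That matters because the monomial form can fail: in $R\circ S\circ R$ the coefficient of $x^4$ is $2t^2+16t$.
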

\begin{proof}
    Let $C := \max_{k \in K}C_k (1) \leq 2^{2^Q}$. Then
    \[\log W(T)(1) = \log\prod_{v \in \Int(T)}C_{\deg(v)}(1) \leq  \log  \prod_{v \in \Int(T)} 2^{2^Q}  = O(2^Q|\Int(T)|).\]
\end{proof}

\begin{lem}\label{lemma: L(T) upper bound} For all $T \in \mathcal{T}_m^K$ with $Q(T) \leq k$,
    \[L(T) = O(2^{O(Q)}2^{np(Q,m)}k^{1-\frac{p(Q,m)}{Q}}), \]
where the implied constant depends only on $a$.
\end{lem}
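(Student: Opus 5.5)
The plan is to bound the number of leaves level by level. Write $p = p(Q,m)$. For a tree $T \in \mathcal{T}_m^K$ let $N_j$ be the number of vertices at depth $j$, so that $N_0 = 1$ and $L(T) = N_m$. Let $E_j$ be the number of vertices at depth $j$ whose degree is not $2^{p}$. By Lemma \ref{lemma: facts about phi} every degree is at most $\deg \phi_{Q,m} = 2^Q$. Hence every vertex at depth $j$ has at most $2^p$ children, except the $E_j$ exceptional ones, which have at most $2^Q$. This gives the recursion
\[ N_{j+1} \le 2^p N_j + 2^Q E_j, \qquad E_j \le N_j, \qquad \sum_{j} E_j = Q(T) \le k , \]
where the last constraint is Lemma \ref{lem: Q(T)}. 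Unrolling it,
\[ L(T) = N_m \le 2^{pm} + 2^Q \sum_{j=0}^{m-1} E_j\, 2^{p(m-1-j)} . \]
It also follows trivially that $N_j \le 2^{Qj}$, hence $E_j \le \min(2^{Qj}, k)$.

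The main step is to maximize the right-hand side under these constraints. The weights $2^{p(m-1-j)}$ decrease in $j$, so the worst case puts the exceptional vertices as high in the tree as possible. Set $h = \lceil \log_2 k / Q \rceil$, so that $2^{Qh} \approx k$ up to a factor $2^Q$, and split the sum at $j = h$.

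For $j \le h$ use $E_j \le 2^{Qj}$. The terms $2^{Qj}2^{p(m-j)}$ form a geometric series with ratio $2^{Q-p} \ge 2$; here we need $p < Q$, which holds once $Q$ is large enough in terms of $a$ because $a<1$. The sum is therefore dominated by the top term, so it is $O(2^{Qh} 2^{p(m-h)}) = O(2^{O(Q)} 2^{pm} (2^{Qh})^{1-p/Q})$, which is $2^{O(Q)} 2^{pm} k^{1-p/Q}$.

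For $j > h$ use only $\sum E_j \le k$ together with the monotonicity of the weights. This part is at most $k\, 2^{p(m-h)} = 2^{pm} k \cdot 2^{-ph}$. Since $2^{-ph} \le k^{-p/Q}$, this is at most $2^{pm} k^{1-p/Q}$.

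Adding the two pieces and the term $2^{pm}$ (note $k \ge 1$) gives $L(T) = O(2^{O(Q)} 2^{pm} k^{1-p/Q})$. Since $m \le n$, this is in particular bounded by the stated expression with $2^{np}$. The exponent $2^{mp}$ is the natural one, and the version with $n$ follows trivially.

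The step I expect to need the most care is the optimization just described. The rounding in $h$ and the possible small values of $Q-p$ only cost factors $2^{O(Q)}$, which is why the bound is stated with that slack. The dependence on $a$ enters only through the requirement $p < Q$, i.e. through choosing $Q$ large relative to $1/(1-a)$.
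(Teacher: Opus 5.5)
Your proof is correct and follows essentially the paper's argument: the same level recursion $N_{j+1}\le 2^pN_j+2^QE_j$, the same split at $h\approx\log_{2^Q}k$, and the same use of $\sum_j E_j\le k$ with decreasing weights below level $h$ (and you are right that the natural factor is $2^{mp}$, since the trees have height $m$). The only difference is that the paper starts the unrolling at level $h$, using the trivial bound $N_h\le 2^{Qh}\le k$, which avoids your geometric series and with it the assumption $p<Q$; that assumption is harmless anyway, since $p=Q$ forces $\phi_{Q,m}=x^{2^Q}$, and then there are no exceptional vertices at all.
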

\begin{proof} Let $T \in \mathcal{T}_m^K$.
    Let $N_j$ denote the number of vertices of $T$ at height $j$, so that $L(T) = N_n$. Let $Q_j$ denote the number of internal vertices of height $j$ of degree which is \emph{not} $2^{p(Q,m)}$. Then, $\sum_{j=0}^{n-1}Q_j = Q(T) \leq k$, by Lemma \ref{lem: Q(T)}.\\
    We compare $N_{j+1}$ to $N_j$. every vertex at height $j$ has either degree $2^{p(Q,m)}$, or degree $\neq 2^{p(Q,m})$ (but at most $2^Q$). So
    \begin{equation}\label{eq3} N_{j+1} \leq 2^{p(Q,m)}(N_j - Q_j) + 2^Q Q_j = 2^{p(Q,m)} N_j + Q_j(2^Q - 2^{p(Q,m)}) \leq 2^{p(Q,m)}N_j + 2^Q Q_j. \end{equation}
    Let $h := \lfloor \log_{2^Q}k\rfloor$. Trivially, $N_h \leq (2^Q)^h \leq k$. Iterating (\ref{eq3}) from $j = h$ to $n-1$ gives
    \[L(T) =  N_n \leq (2^{p(Q,m)})^{n-h}N_h + 2^Q\sum_{i=h}^{n-1}Q_i (2^p)^{n-1-i}  = O_a( 2^{O(Q)}2^{np(Q,m)}k^{1-\frac{p(Q,m)}{Q}}). \]

\end{proof}
\noindent The final Lemma required for Proposition \ref{prop: upper bound coefficients} says that $|\{ T \in \mathcal{T}_m^K: Q(T) \leq k\}|$ is not too large.
\begin{lem}\label{lemma: few trees}
    \[|\{ T \in \mathcal{T}_m^K: Q(T) \leq k\}| \leq (|K|+1)^{|V(T)|}\]
Where $V(T)$ is the set of vertices of $T$ (in our case, $|K| \leq 2^{Q}$).
\end{lem}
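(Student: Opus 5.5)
We bound the number of trees with a given vertex set size by encoding each tree as a word over an alphabet of size $|K|+1$. Fix a canonical ordering of the children of each vertex, as is implicit in Proposition \ref{prop: H_n trees}: there the trees arise from ordered tuples $(T_1,\ldots,T_k)$, so the elements of $\mathcal{T}_m^K$ are rooted plane trees. Traverse $T$ in breadth-first order, level by level and left to right within each level. To each vertex $v$ assign the symbol $\deg(v)\in K$ if $v\in\Int(T)$, and a special symbol $\ast$ if $v\in L(T)$. This gives a word $w(T)$ of length $|V(T)|$ over the alphabet $K\cup\{\ast\}$, which has $|K|+1$ letters.

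The key step is that $w(T)$ determines $T$. We reconstruct level by level. The first letter is the root's out-degree. In general, once the vertices at height $j$ and their out-degrees are known, the number of vertices at height $j+1$ is the sum of those out-degrees. These vertices are the next letters of the word, and each one is attached to its parent in the order prescribed by the plane structure. Since every tree in $\mathcal{T}_m^K$ has all leaves at height exactly $m$, the letter $\ast$ occurs precisely at height $m$, and the procedure terminates consistently. The map $T\mapsto w(T)$ is therefore injective.

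Hence the number of trees $T$ with $|V(T)|=N$ is at most $(|K|+1)^N$. In particular, the set $\{T\in\mathcal{T}_m^K : Q(T)\le k\}$, restricted to trees of a given size $|V(T)|$, has at most $(|K|+1)^{|V(T)|}$ elements, which is the statement. For the use in Proposition \ref{prop: upper bound coefficients}, one then sums over the possible sizes $N\le N_{\max}$, where $N_{\max}$ bounds $|V(T)|\le m\,L(T)+1$ via Lemma \ref{lemma: L(T) upper bound}, since each level has at most $L(T)$ vertices. This sum costs at most a factor $N_{\max}$, which is negligible after taking logarithms. Finally, $|K|\le 2^Q$ holds because $\deg\phi_{Q,m}=2^Q$ by Lemma \ref{lemma: facts about phi}, so the exponents occurring in $K$ lie in $\{1,\ldots,2^Q\}$.

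The only delicate point is to interpret the statement correctly: as written, the right-hand side depends on $T$. The meaningful claim is the size-stratified count above, or equivalently the bound with $|V(T)|$ replaced by the maximal size of a tree satisfying $Q(T)\le k$. The injectivity of the encoding is routine once the plane-tree convention from Proposition \ref{prop: H_n trees} is made explicit.
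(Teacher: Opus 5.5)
Your encoding argument is correct and is essentially the paper's proof. The paper writes the degree word in preorder rather than breadth-first order, which makes no difference. Your remark that the right-hand side only makes sense with $|V(T)|$ fixed, or replaced by its maximum over admissible trees, is a fair repair of the statement as written.

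One caveat concerns your aside about Proposition~\ref{prop: upper bound coefficients}. The bound $|V(T)|\le mL(T)+1$ puts an extra factor $m\approx n/Q$ into the exponent of $(|K|+1)^{N_{\max}}$, and that factor is not negligible. For $a\in\Q$ it costs a factor $\log d$ and destroys the $\sim$. Instead, use that every internal degree is at least $2^{p(Q,m)}\ge 2$, which gives $|\Int(T)|\le L(T)-1$ and hence $|V(T)|\le 2L(T)$, as the paper does.
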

\begin{proof}
    Let $\Sigma = \{0,1,\ldots,|K|\}$. List the vertices in preorder: visit the root, then recursively visit the children from left to right. Define the word 
    \[ w(T) = d_1d_2 \ldots d_{|V(T)|} \in \Sigma^{|V(T)|}, \]
    clearly this mapping is an injection, which proves the lemma.
\end{proof}
\noindent We now prove Proposition \ref{prop: upper bound coefficients}.
\begin{proof}[Proof of Proposition \ref{prop: upper bound coefficients}] Using the representation for $a_{Qm,k}$ from Corollary \ref{cor: coefficient formula}, we may bound
 \[ a_{Qm,k} \leq \sum_{j=0}^k \sum_{T \in \mathcal{T}_m^K} W(T)(1) \binom{L(T)}{k-j}2^{L(T)-(k-j)} \leq k\sum_{T \in \mathcal{T}_m^K} W(T)(1) 4^{L(T)}, \]
where for every $j$, instead of taking $[t^j]W(T)$, we simply took the sum of \emph{all} the coefficients together, i.e, we take $W(T)(1)$, and we also bounded both the binomial coefficients and $2^{L(T)-(k-j)}$ by $2^{L(T)}$. By Lemmas \ref{lemma: W(T)(1) bound}, \ref{lem: Q(T)} \ref{lemma: L(T) upper bound}, and \ref{lemma: few trees}, we get
\[ \log a_{qn, k} = O(\log k + 2^{Q}|\Int(T)| + 2^{Q}|V(T)|+ L(T)).  \]
Finally, noting that $L(T) = 1+ \sum_{v \in \Int(T)}(\deg(v)-1)$, and that every internal vertex in our graph is of degree at least $2$, we get $|\Int(T)| \leq L(T) - 1 $, and so \\ $|V(T)| = |L(T)| + |\Int(T)| \leq 2L(T) =O(2^{O(Q)}2^{np(Q,m)}k^{1-\frac{p(Q,m)}{Q}})$. We are done.
\end{proof}

\section{The Lower Bound}

We are going to bound $a_{Qm,k}$ from below using the representation from Corollary \ref{cor: coefficient formula}, by choosing a special family of trees $T_{m} \in \mathcal{T}_m$ with many leaves, which will, informally, satisfy
\[ \log a_{Qm,k} = \Omega( L(T_m)). \]
To be precise, we prove
\begin{prop}\label{prop: lower bound} Let $a_{Qm,k}$ be as in Corollary \ref{cor: coefficient formula}. If $k = d^{\delta} = 2^{\delta m Q}$ for some $\delta \in (0,1)$, then, for large $m$,
\[ \log a_{Qm,k} = \Omega(2^{O(Q)}2^{mp(Q,m)}k^{1-\frac{p(Q,m)}{Q}}), \]
where the implied constant depends only on $a$ and $\delta$.
\end{prop}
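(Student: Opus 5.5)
Set $p=p(Q,m)$, so that $K\ni 2^p$ and $C_{2^p}(t)=A$ with $A\ge1$ a constant (no power of $t$). All terms in Corollary \ref{cor: coefficient formula} are nonnegative. It therefore suffices to find a single tree $T\in\mathcal T_m^K$ and an index $j$ with $[t^j]W(T)\ge 1$, $0\le j\le k$, such that
\[
\binom{L(T)}{k-j}2^{L(T)-(k-j)} \ge 2^{L(T)-k},
\]
and with $L(T)$ of the claimed order. The simplest choice is $j=0$. Then $W(T)$ needs a nonzero constant term, which forces every internal vertex to have degree $2^p$. But that tree has only $2^{pm}$ leaves, which is too few. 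So we need a mixed tree and must use $t$-powers from the weights.

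\textbf{Construction.} Use the monomial of $\phi_{Q,m}$ with the maximal degree $2^Q$. This comes from choosing $tx^2$ at every $R$-step. Its coefficient is $t^{\lambda}$ with $\lambda\le Q-p\le Q$. The tree $T_m$ is built in two stages. Let $h=\lfloor \log_{2^Q}(k/Q)\rfloor$, so $h\approx \delta m - O(\log Q/Q)$. At heights $0,\dots,h-1$ every vertex gets degree $2^Q$. At heights $h,\dots,m-1$ every vertex gets degree $2^p$.

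Then $W(T_m)=A^{\#}\,t^{\lambda N}$, where $N=\sum_{i<h}2^{Qi}\le 2\cdot 2^{Q(h-1)}$. Hence $j:=\lambda N\le Q\,2^{Qh}/2^{Q-1}\le k$, and $[t^j]W(T_m)\ge1$. The leaf count is
\[
L(T_m)=2^{Qh}\,2^{p(m-h)}=2^{pm}\,(2^{Qh})^{1-p/Q}\ge 2^{-O(Q)}\,2^{pm}\,k^{1-p/Q}.
\]
Here we use $2^{Qh}\ge k/(Q2^Q)$, and the factor $Q$ is absorbed into $2^{O(Q)}$.

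\textbf{Extracting the lower bound.} With this $j$, the term of Corollary \ref{cor: coefficient formula} is at least $2^{L(T_m)-k}$. So
\[
\log a_{Qm,k}\ge L(T_m)-k.
\]
It remains to show $k=o(L(T_m))$, or at least $k\le \tfrac12 L(T_m)$, for large $m$. We have
\[
L(T_m)/k\ \ge\ 2^{-O(Q)}\,2^{pm}\,k^{-p/Q}=2^{-O(Q)}\,2^{pm(1-\delta)}.
\]
In the rational case $Q$ is fixed and $p\ge1$ (when $aQ\ge1$), so this ratio tends to infinity. In the irrational case $Q=\sqrt n$ and $p\approx a\sqrt n$, so $pm(1-\delta)\approx a(1-\delta)n$ dominates $O(\sqrt n)$. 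In both cases $\log a_{Qm,k}=\Omega(L(T_m))$, which is the claim with the stated $2^{O(Q)}$ loss.

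\textbf{Main obstacle.} The delicate step is the bookkeeping that ensures the $t$-degree $\lambda N$ consumed by the top part of the tree stays below $k$. This is handled by trimming $h$ with the extra $\log_{2^Q}Q$ so that $Q\cdot 2^{Qh}\lesssim k$. A related point is that $p\ge1$ must hold, which means choosing $Q$ large enough relative to $1/a$. The cost of both adjustments is only a $2^{O(Q)}$ factor. One must also check that the exponent $2^{mp}$, as opposed to the $2^{np}$ in the upper bound, is consistent with the normalization $d=2^{Qm}$. Matching it to $d^{a+\delta(1-a)}$ uses $p/Q\to a$, with rate $1/Q$ in the irrational case, which gives the $O(1/\sqrt{\log d})$ error.
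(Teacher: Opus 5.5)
This is essentially the paper's argument. You use the same two-stage tree $T_m$ (full $2^Q$-ary down to height $h$, then $2^{p}$-ary). You extract the same single term $2^{L(T_m)-k}$ from Corollary~\ref{cor: coefficient formula} and make the same check that $L(T_m)\ge 2k$ for large $m$. Your choice $h=\lfloor\log_{2^Q}(k/Q)\rfloor$ in place of the paper's $\lfloor\log_{2^Q}(k/(2\lambda))\rfloor$ is a harmless variation. Your exact count $N=\sum_{i<h}2^{Qi}$ of degree-$2^Q$ vertices is slightly more careful than the paper's.

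One claim is false, however: $\lambda\le Q-p$. The $t$-exponent of the leading coefficient doubles under every later squaring. Since $T_{Qm}$ is applied first, put $\epsilon_j=1$ if $T_{Qm+j}=R$ and $\epsilon_j=0$ otherwise. Then $\lambda=\sum_{j=0}^{Q-1}\epsilon_j2^{Q-1-j}$, which lies between $2^{Q-p}-1$ and $2^Q-2^p$. For example, $R\circ R$ has leading term $t^3x^4$, so $\lambda=3>Q-p=2$. In the irrational case this makes $\lambda$ exponentially large in $Q$, so your estimate $j\le Q\,2^{Qh}/2^{Q-1}$ is not justified as written.

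The step survives with the correct bound $\lambda\le 2^Q-1$ if you use the exact geometric sum instead of $N\le 2\cdot 2^{Q(h-1)}$: $j=\lambda N\le (2^Q-1)\frac{2^{Qh}-1}{2^Q-1}<2^{Qh}\le k/Q\le k$. The rest of your argument then goes through unchanged; in fact the extra trimming of $h$ by $Q$ is not even needed.
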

We build a family of trees $T_m \in \mathcal{T}_m^K$ as follows. As explained in Lemma \ref{lemma: facts about phi}, we can write $\phi_{Q,m}(x,t) = Ax^{2^{p(Q,m)}} + Bt^{\lambda}x^{2^Q} + (\text{other terms})$, for some \emph{positive} integers $A,B,\lambda$. start with the full $2^Q$-ary tree (where each vertex has degree $2^Q$) up to height $h-1$, for $h := \lfloor \log_{2^Q}(\frac{k}{2\lambda})\rfloor $. For heights $h,h+1,\ldots,m-1$, every vertex has degree $2^{p(Q,m)}$; that is, each of the $(2^Q)^h$ vertices with height $h-1$ is a root of a full $2^{p(Q,m)}$-ary tree of height $m-h$. Note that the number of vertices of degree $2^Q$ is
    \[ \sum_{i=0}^{h-1} (2^Q)^i = \frac{(2^Q)^h - 1}{2^Q - 1} \leq (2^Q)^h \leq k \]
    i.e, the trees $T_m$ satisfy $Q(T_m) \leq k$. Let $j^* := \lambda (2^Q)^h \leq \frac{k}{2}$.
\begin{lem}\label{lemma: W(T_m) and j^*}
    $[t^{j^*}]W(T_m) \geq 1 $.
\end{lem}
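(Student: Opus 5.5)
The plan is to compute $W(T_m)$ explicitly. It turns out to be a single monomial in $t$ with a positive integer coefficient, and the claim then reduces to identifying its exponent with $j^*$.

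\emph{Step 1: each factor is a monomial.} By Lemma \ref{lemma: facts about phi}, every $C_k(t)$ has the form $A_k t^{\lambda_k}$ with $A_k,\lambda_k\in\Z_{\ge 0}$. Only two degrees occur in $T_m$, and both give positive coefficients: a vertex of degree $2^{p(Q,m)}$ contributes $C_{2^{p(Q,m)}}(t)=A$ (here $\lambda_k=0$), and a vertex of degree $2^Q$ contributes $C_{2^Q}(t)=Bt^{\lambda}$. The positivity of $A$ and $B$ I would justify directly. The $x^{2^{p(Q,m)}}$ term comes from choosing the $t$-free monomial ($x^2$ from $S$, $2x$ from $R$) in every factor of $\phi_{Q,m}$. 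The top-degree term $x^{2^Q}$ comes from choosing $x^2$ in $S$ and $tx^2$ in $R$ every time. All coefficients in $R$ and $S$ are nonnegative, so no cancellation can occur.

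\emph{Step 2: multiply out.} Let $N$ be the number of internal vertices of degree $2^Q$ and $M$ the number of degree $2^{p(Q,m)}$. Then
\[ W(T_m)=\prod_{v\in\Int(T_m)}C_{\deg(v)}(t)=A^{M}B^{N}t^{\lambda N}. \]
So $[t^{\lambda N}]W(T_m)=A^MB^N\ge 1$, since $A$ and $B$ are positive integers. The claim therefore reduces to the identity $\lambda N=j^*$.

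\emph{Step 3: count the degree-$2^Q$ vertices.} This is the step I expect to be the real obstacle, because it depends on exactly which heights are made $2^Q$-ary. Since $j^*=\lambda(2^Q)^h$, the construction must be read so that the degree-$2^Q$ vertices number exactly $N=(2^Q)^h$. For example, one can take the degree-$2^Q$ vertices to be precisely the $(2^Q)^h$ vertices at a single level of an already-branched tree, padding so that the counts match. With the reading where the $2^Q$-ary part occupies heights $0,\dots,h-1$ literally, one instead gets $N=\frac{(2^Q)^h-1}{2^Q-1}$, as the displayed count in the construction shows. In that case $t^{j^*}$ would not appear in the single monomial $W(T_m)$.

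I would therefore first fix the construction so that $N=(2^Q)^h$ and $\lambda N=j^*$, and then Step 2 finishes the proof. In either reading, the property used downstream survives: some $j\le\lambda(2^Q)^h=j^*\le k/2$ has $[t^j]W(T_m)\ge1$. So the subsequent lower bound via Corollary \ref{cor: coefficient formula}, which uses the binomial factor $\binom{L(T_m)}{k-j}$ with $k-j\ge k/2$, goes through unchanged.
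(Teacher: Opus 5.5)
Your computation $W(T_m)=A^{M}B^{N}t^{\lambda N}$ is the same as the paper's, and your Step~3 objection is correct. The paper's proof simply asserts that there are ``exactly $(2^Q)^h$ vertices of degree $2^Q$''. That contradicts its own count $N=\frac{(2^Q)^h-1}{2^Q-1}<(2^Q)^h$ in the construction of $T_m$. For the tree as literally built, $W(T_m)$ is the single monomial $A^{M}B^{N}t^{\lambda N}$ with $\lambda N<j^*$ (recall $\lambda\ge 1$). So $[t^{j^*}]W(T_m)=0$, and the lemma as stated fails; the paper's proof does not notice this.

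The cleanest repair is to redefine $j^*:=\lambda\frac{(2^Q)^h-1}{2^Q-1}$, which still satisfies $0\le j^*\le \lambda(2^Q)^h\le k/2$. The bound $\binom{L(T_m)}{k-j^*}\ge 1$ and the proof of Proposition~\ref{prop: lower bound} only use $j^*\ge 0$ and $0\le k-j^*\le L(T_m)$, so they go through verbatim.

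Your padding alternative also works once made concrete. Give degree $2^Q$ to an additional $(2^Q)^h-N$ of the $(2^Q)^h$ vertices at height $h$. These vertices are internal, since $h<\delta m<m$ when $k=2^{\delta Qm}$. This makes the count exactly $(2^Q)^h\le k$, and it can only increase $L(T_m)$.

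In Step~1, do not lean on Lemma~\ref{lemma: facts about phi} for the claim that every $C_k$ is a monomial, because that claim is false in general. For $a=2/5$ and $Q=5$ one has $\phi_{5,m}=R\circ R\circ S\circ R\circ S$, whose $x^{8}$-coefficient is $96t+4t^2$. What actually carries the proof is your direct argument through the extreme $x$-degrees, and it suffices. The lowest coefficient is a positive integer $A$, and the top one is $t^{\lambda}$ (so in fact $B=1$). These are the only two coefficients that enter $W(T_m)$.
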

\begin{proof}
    \[
        W(T_m)(t) = \prod_{v\in V(T): \deg(v) = 2^{p(Q,m)}} C_{2^{p(Q,m)}}(t) \prod_{v \in V(T): \deg(v) = 2^Q} C_{2^Q}(t) = \] 
        \[ \prod_{v\in V(T): \deg(v) = 2^{p(Q,m)}} A \prod_{v \in V(T): \deg(v) = 2^Q} Bt^{\lambda}
    \]
Since there are exactly $(2^Q)^h$ vertices of degree $2^Q$, the lemma follows.
\end{proof}
\begin{lem}\label{lemma: L(T_m) lower bound} We have
    \[ L(T_m) = \Omega_{a,\delta}(2^{O(Q)}2^{p(Q,m)m}k^{1-\frac{p(Q,m)}{Q}}) \]
Moreover, if $k = d^{\delta} = 2^{\delta Qm}$ for some fixed $\delta \in (0,1)$, then $L(T_m) > 2k$ for large $m$.
\end{lem}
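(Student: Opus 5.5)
The leaf count of $T_m$ can be computed exactly from its construction, and then compared with the target expression. The first $h$ levels (heights $0,\ldots,h-1$) form a full $2^Q$-ary tree, so there are $(2^Q)^h$ vertices at height $h$. Below that, each of these roots a full $2^{p(Q,m)}$-ary tree that extends down to height $m$. Writing $p=p(Q,m)$, this gives
\[ L(T_m) = (2^Q)^h \cdot (2^{p})^{m-h} = 2^{pm}\,\bigl(2^{Q-p}\bigr)^{h}. \]
The proof therefore reduces to bounding $(2^Q)^h$ from below in terms of $k$.

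By the definition $h=\lfloor \log_{2^Q}(\frac{k}{2\lambda})\rfloor$ we have $(2^Q)^h \geq \frac{k}{2\lambda 2^Q}$. Since $0\le p\le Q$, raising to the power $1-\frac{p}{Q}\in[0,1]$ gives
\[ (2^{Q-p})^h = \bigl((2^Q)^h\bigr)^{1-\frac pQ} \geq \Bigl(\frac{k}{2\lambda 2^Q}\Bigr)^{1-\frac pQ} \geq \frac{k^{1-\frac pQ}}{2\lambda 2^Q}. \]
The remaining task is to control $\lambda$. By Lemma \ref{lemma: facts about phi}, $\lambda$ is the $t$-degree of the monomial $t^\lambda x^{2^Q}$ in $\phi_{Q,m}$. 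That monomial arises by taking the $tx^2$ term at every application of $R$, so $\lambda \le Q$ (in fact $\lambda\le 2^Q$ trivially suffices). Hence $2\lambda 2^Q = 2^{O(Q)}$, and $L(T_m) \geq 2^{-O(Q)} 2^{pm} k^{1-p/Q}$, which is the first claim. One point needs care: $h$ must be nonnegative, i.e. $k \ge 2\lambda$, and $h\le m$ must hold for the construction to make sense. Both follow for large $m$ when $k=2^{\delta Qm}$, since then $\log_{2^Q}k=\delta m<m$.

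For the second claim, substitute $k=2^{\delta Qm}$ into the bound just proved:
\[ L(T_m) \geq 2^{-O(Q)}\, 2^{pm + \delta m(Q-p)} = 2^{-O(Q)}\, 2^{\delta Q m}\cdot 2^{(1-\delta)pm}. \]
Comparing with $2k = 2^{\delta Q m+1}$, it suffices that $(1-\delta)pm - O(Q) - 1 > 0$. Since $p\ge \lfloor aQ\rfloor$, we have $p\gtrsim aQ$ once $Q$ is large enough that $\lfloor aQ\rfloor\ge1$. In the rational case $Q=q$ is fixed, and $p\ge1$ holds provided $aq\ge1$, which is true for $a=p/q$ with $p\geq 1$. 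In the irrational case $Q=\sqrt n$ grows. In both cases the left side is of order $(1-\delta)aQm - O(Q) = Q\bigl((1-\delta)am - O(1)\bigr)$, which tends to infinity as $m\to\infty$.

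The main obstacle is bookkeeping rather than any conceptual step. Specifically, one has to make sure that the hidden constant in $2^{O(Q)}$ is uniform (coming only from $\lambda$ and the floor in $h$), and that $p\ge 1$ with $p=\Theta(aQ)$, so that the factor $2^{(1-\delta)pm}$ genuinely dominates the $2^{O(Q)}$ loss. I would handle this by bounding $\lambda$ explicitly from the structure of $\phi_{Q,m}$, and by noting that $Q\ll m$ in both regimes: either $Q$ is fixed, or $Q=\sqrt n$ while $m\approx\sqrt n$ with $Qm\approx n$. In the latter case the $O(Q)$ term is still dominated by $pm=\Theta(n)$.
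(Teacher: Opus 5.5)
Your proof is correct and follows essentially the same route as the paper: the exact leaf count $(2^Q)^h(2^{p})^{m-h}$, the floor bound on $h$, absorbing $2\lambda 2^Q$ into $2^{O(Q)}$, and substituting $k=2^{\delta Qm}$ to expose the surplus factor $2^{(1-\delta)pm}$, with somewhat more care than the paper about $0\le h\le m$ and $p\ge 1$. One correction: the claim $\lambda\le Q$ is false, because the $t$-exponent of the leading coefficient goes $e\mapsto 2e$ under $S$ and $e\mapsto 2e+1$ under $R$, so in fact $2^{Q-p}-1\le\lambda\le 2^Q-1$; your fallback bound $\lambda\le 2^Q$ is the correct one and is all the argument needs.
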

\begin{proof}
    \[ L(T_m) = (2^Q)^h \cdot (2^{p(Q,m)})^{m-h} = 2^{Qh}\cdot 2^{p(Q,m)m}\cdot 2^{-hp(Q,m)} \geq  \]
\[
    (2^Q)^{\log_{2^Q}(\frac{k}{2\lambda})-1}\cdot \left((2^{Q})^{-\log_{2^Q}(\frac{k}{2\lambda})} \right)^{\frac{p(Q,m)}{Q}}\cdot (2^{mQ})^{\frac{p(Q,m)}{Q}} = 2^{-Q}\frac{k}{2\lambda} \cdot (\frac{k}{2\lambda})^{-\frac{p(Q,m)}{Q}} \cdot (2^{mQ})^{\frac{p(Q,m)}{Q}} = 
\]
\begin{equation}\label{eq4} 2^{-Q}\left( \frac{1}{2\lambda}\right)^{1-\frac{p(Q,m)}{Q}} k^{1-\frac{p(Q,m)}{Q}}(2^{mQ})^{\frac{p(Q,m)}{Q}} = 2^{O(Q)} k^{1-\frac{p(Q,m)}{Q}}(2^{mQ})^{\frac{p(Q,m)}{Q}} \end{equation}

\noindent We now prove that $L(T_m) > 2k$ for large $m$. Moving stuff around in the lower bound we got in (\ref{eq4}), we see that for this to happen, it suffices that
\[ 2^{mQ} > 2^{ O(Q)}  2^{\delta m Q} \]
Equivalently,
\[ 2^{(1-\delta)mQ} > 2^{O(Q)} \]
Which clearly holds for large $m$, as long as $Q$ is not too big (recalling that $m = \lfloor \frac{n}{Q} \rfloor$ we may take $Q = o(n)$,  as we will in the proof of Theorem \ref{thm: Hanner asymptotics}).
\end{proof}
\noindent As an immediate Corollary, we have
\begin{cor}\label{cor: binom L(T_m)}
    \[\binom{L(T_m)}{k -j^*} \geq 1. \]
\end{cor}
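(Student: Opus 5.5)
Since Corollary \ref{cor: binom L(T_m)} is stated as an immediate consequence of Lemma \ref{lemma: L(T_m) lower bound}, the plan is to verify that the binomial coefficient $\binom{L(T_m)}{k-j^*}$ is a genuine binomial with top argument at least the bottom argument and bottom argument nonnegative. Recall the convention that $\binom{N}{r}\ge 1$ exactly when $0\le r\le N$ (and $N\ge 0$). So it suffices to establish the two inequalities
\[ 0 \le k - j^* \le L(T_m). \]

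For the left inequality, we use the definition $j^* = \lambda (2^Q)^h$ with $h = \lfloor \log_{2^Q}(\frac{k}{2\lambda})\rfloor$. This gives $(2^Q)^h \le \frac{k}{2\lambda}$, hence $j^* \le \frac{k}{2}$, as already noted right before Lemma \ref{lemma: W(T_m) and j^*}. Thus $k - j^* \ge \frac{k}{2} \ge 0$. (For large $m$ we have $k = 2^{\delta mQ}\ge 2\lambda$, so $h \ge 0$ is well defined; this is also where we need $m$ large.)

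For the right inequality, we invoke the second part of Lemma \ref{lemma: L(T_m) lower bound}: when $k = d^\delta = 2^{\delta Q m}$ and $m$ is large, $L(T_m) > 2k$. Since $j^*\ge 0$, this gives $L(T_m) > 2k \ge k \ge k - j^*$. Combining the two inequalities, $\binom{L(T_m)}{k-j^*}\ge 1$.

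There is no real obstacle here. The only point needing care is that the ``large $m$'' hypothesis of Lemma \ref{lemma: L(T_m) lower bound} relies on $Q = o(n)$, so that $2^{(1-\delta)mQ}$ beats $2^{O(Q)}$. This holds for both choices of $Q$ (constant $Q=q$, or $Q=\sqrt n$), so the corollary holds under the same standing assumptions as Proposition \ref{prop: lower bound}.
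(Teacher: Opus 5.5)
Your proof is correct and is exactly the argument the paper leaves implicit when it calls this an immediate corollary. The bound $j^*\le k/2$ gives $k-j^*\ge 0$, and $L(T_m)>2k$ from Lemma~\ref{lemma: L(T_m) lower bound} gives $k-j^*\le L(T_m)$, so the binomial coefficient is at least $1$, under the same standing assumptions ($m$ large, $Q=o(n)$) as Proposition~\ref{prop: lower bound}.
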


\begin{proof}[Proof of Proposition \ref{prop: lower bound}]
From Corollary \ref{cor: coefficient formula} and Lemma \ref{lem: Q(T)}, we have, for all $T \in \mathcal{T}_m^K$ with $Q(T) \leq k$:
\begin{equation}\label{eq5}
     a_{Qm,k} \geq \sum_{j=0}^k [t^j]W(T) \binom{L(T)}{k-j}2^{L(T)-(k-j)}.
\end{equation}
\noindent We use Lemma \ref{lemma: W(T_m) and j^*} and Corollary \ref{cor: binom L(T_m)} in (\ref{eq5})
\[ a_{Qm,k} \geq [t^{j^*}]W(T)\binom{L(T_m)}{k-j^*}2^{L(T_m)-k} \geq 2^{L(T_m) -k}.   \]
Hence, by Lemma \ref{lemma: L(T_m) lower bound},
\[ \log a_{Qm,k} = \Omega_{a,\delta}( L(T_m) - k)= \Omega_{a,\delta}(L(T_m)) =  \Omega_{a, \delta}(2^{O(Q)}2^{p(Q,m)m}k^{1-\frac{p(Q,m)}{Q}}). \]

\end{proof}

\section{Extending to all dimensions, and proof of the main Theorems}
In Propositions \ref{prop: upper bound coefficients} and \ref{prop: lower bound} we computed the asymptotics
$\log a_{Qm,k}$. This gives us the asymptotics in all dimensions of the form $2^{Qm}$. Obtaining asymptotics all dimensions which are powers of $2$ will be an easy application of Remark \ref{remark: increasing sequence}.\\
\begin{prop}\label{prop: all dimensions asymptotics}
    Let $n \in \N$ and let $d=2^n$. Decompose $n = Qm+r$ where $m = \lfloor \frac{n}{Q}\rfloor$ and $r \in \{0,1,\ldots,Q-1\}$. Then
    \[ \log a_{Qm,k} \leq \log a_{n,k} \leq 2^{r}(\log k+ \log A_{n,k}) = O(2^r\log A_{n,k}). \]
So, by Propositions \ref{prop: lower bound} and \ref{prop: upper bound coefficients},
\[ \log a_{Qm+r,k}  \sim 2^{O(Q)} 2^{np(Q,m)}k^{1-\frac{p(Q,m)}{Q}}\:,\]
when $k = d^{\delta}$ for some fixed $\delta \in (0,1)$. All the implied constants depend only of $a$ and $\delta$.
\end{prop}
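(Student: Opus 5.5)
The plan is to prove the chain of inequalities first, and then read off the asymptotic statement from Propositions \ref{prop: lower bound} and \ref{prop: upper bound coefficients}.

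\emph{Lower inequality.} Since $n = Qm + r \geq Qm$, iterating part 1 of Remark \ref{remark: increasing sequence} $r$ times gives $a_{n,k} \geq a_{Qm,k}$. Taking logarithms yields $\log a_{Qm,k} \leq \log a_{n,k}$.

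\emph{Upper inequality.} Apply part 2 of Remark \ref{remark: increasing sequence} with base index $Qm$ and $r$ further steps:
\[ a_{n,k} = a_{Qm+r,k} \leq k^{2^r} A_{Qm,k}^{2^r}. \]
I will verify the "simple induction" quickly. In either step of the recursion, $a_{n+1,k}$ is a sum of at most $k+2$ products of two entries, each bounded by $A_{n,k}$ (allowing the convention entries equal to $1$ and the extra $2a_{n,k}$ term). Hence $A_{n+1,k} \leq (k+2)A_{n,k}^2$, up to harmless constants, and iterating gives $A_{n+r,k} \leq (ck)^{2^r-1}A_{n,k}^{2^r}$. Taking logarithms,
\[ \log a_{n,k} \leq 2^r(\log k + \log A_{Qm,k}). \]
The statement writes $A_{n,k}$ here; I read the intended index as $Qm$, which is what the asymptotic conclusion needs, and in any case $A_{Qm,k} \leq A_{n,k}$ by monotonicity. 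Since $A_{Qm,k} \geq a_{Qm,0} \geq 2^{2^{Qm}}$ dwarfs $k \leq d$, the $\log k$ term is absorbed, and we get $O(2^r \log A_{Qm,k})$.

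\emph{Asymptotics.} By Proposition \ref{prop: upper bound coefficients} applied at every $j \leq k$, and using that the bound is increasing in $k$ because the exponent $1 - p/Q$ is nonnegative,
\[ \log A_{Qm,k} = O\bigl(2^{O(Q)} 2^{mp(Q,m)} k^{1-\frac{p(Q,m)}{Q}}\bigr). \]
Since $r < Q$, we have $2^r = 2^{O(Q)}$, so the factor $2^r$ is absorbed. The two exponents $2^{np}$ and $2^{mp}$ differ by a factor $2^{(Q-1)m p}$-type discrepancy only through the paper's notational convention, and I will write everything in terms of $m$ consistently. Proposition \ref{prop: lower bound} requires $k = 2^{\delta Qm}$, whereas here $k = d^\delta = 2^{\delta(Qm+r)}$. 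This changes $k$ only by a factor $2^{O(Q)}$, equivalently $\delta$ by $O(1/m)$, and that discrepancy is absorbed into $2^{O(Q)}$. I will check that Lemma \ref{lemma: L(T_m) lower bound}, and in particular the claim $L(T_m) > 2k$, goes through with this slightly enlarged $k$, since the argument only used $2^{(1-\delta)mQ} > 2^{O(Q)}$.

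\emph{Main obstacle.} The expected obstacle is this bookkeeping: keeping the $2^{O(Q)}$ losses uniform in $m$, and making sure that the index mismatch ($n$ versus $Qm$, $k = 2^{\delta n}$ versus $2^{\delta Qm}$) costs only such factors. Everything else follows directly from the cited results.
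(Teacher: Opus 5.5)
Your proposal is correct and follows the paper's route: monotonicity (part 1 of Remark \ref{remark: increasing sequence}) gives the lower inequality, part 2 of that Remark from base index $Qm$ gives the upper one, and then you invoke Propositions \ref{prop: upper bound coefficients} and \ref{prop: lower bound}. Your readings $A_{n,k}\to A_{Qm,k}$ and $2^{np}\to 2^{mp}$ are the right ones (the paper itself uses $2^{mp}$ in the proof of Theorem \ref{thm: Hanner asymptotics}), as is your handling of $k=2^{\delta n}$ versus $2^{\delta Qm}$; the one slip is that $a_{Qm,0}\ge 2^{2^{Qm}}$ is false for $a<1$ (that is the cube's vertex count), but the induction $a_{N+1,0}\ge 2a_{N,0}$ gives $a_{Qm,0}\ge 2^{Qm+1}$, which still absorbs $\log k\le n\log 2$.
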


\noindent We now prove the main Theorem.

\begin{proof}[Proof of Theorem \ref{thm: Hanner asymptotics}]
    Let $n \in \N$ and let $d = 2^n$. Fix some $a \in (0,1)$ and let $P_n^a$ be the basic polytopes corresponding to $a$. Decompose $n = Qm +r$ as before. We start with the case $a = \frac{p}{q} \in \Q$. In this case, choose $Q = q$. Then $p(q,m) = p$ and By Proposition  \ref{prop: all dimensions asymptotics}, we have
    \[ \log a_{n,k} \sim 2^{mp}k^{1-\frac{p}{q}}, \]
    In particular, for a fixed $\delta \in (0,1)$,
    \[ \log a_{n,\lfloor d^{\delta}\rfloor } \sim  d^{a+\delta(1-a)}. \]

\noindent for $a \notin \Q$ we choose some $Q=Q_n = o(n)$. By Proposition \ref{prop: all dimensions asymptotics},
\[ \log a_{n,k} \sim 2^{O(Q_n)}2^{p(Q_n,m)m}k^{1-\frac{p(Q_n,m)}{Q_n}} \: ,  \]
we have $\frac{P(Q_n,m)}{Q_n} = a + O(\frac{1}{Q_n})$, and \[ 2^{p(Q_n,m)m}k^{1-\frac{p(Q_n,m)}{Q_n}} = (2^n)^{\frac{m}{n}p(Q_n,m)}k^{1-\frac{p(Q_n,m)}{Q_n}} \sim (2^n)^{a+O(\frac{1}{Q_n})} k^{1-a(1+O(\frac{1}{Q_n}))}.\] 
Plugging $k = \lfloor d^{\delta} \rfloor $ finally gives
\[  \log a_{n,\lfloor d^{\delta} \rfloor} \sim 2^{O(Q_n)} d^{a + \delta (1-a) + O(\frac{1}{Q_n})} = d^{a + \delta(1-a) + o_d(1)}\; , \]
where the $o_d(1)$ term is $d^{O(\frac{Q_n}{n} + \frac{1}{Q_n})}$.
Choosing $Q_n \sim \sqrt{n} \sim \sqrt{\log_2d}$ gives the bound $O(\frac{1}{\sqrt{\log d}})$ on the $o_d(1)$ term.
\end{proof} 

\begin{proof}[Proof of Theorem \ref{thm: FLM}] The proof idea was given in the discussion before Theorem \ref{thm: FLM}. The family of polytopes we take is the sections $E \cap P_n^a \subset \R^{ d-\lfloor d^{\delta} \rfloor}$, where $E$ is a random subspace of $\R^d$ of dimension $d-\lfloor d^{\delta}\rfloor$. The bounds on the number of facets of the section and on the quantity $\left( \frac{R_d}{r_d}\right)^2$ of the section were obtained in \cite{Mil} (with high probability on the choice of the section $E$). The bound on the number of vertices is obtained by   
Theorem \ref{thm: Hanner asymptotics}, as the number of $d- \lfloor d^{\delta} \rfloor$ faces of $P_n^a$ bounds from above the number of vertices of the section (with probability $1$), as was explained in that discussion.
    
\end{proof}

\bibliographystyle{amsplain}

\end{document}